\documentclass[12pt]{article}
\usepackage{srcltx}
\usepackage{amsfonts,amssymb,mathrsfs,amsmath,cmtiup}
\usepackage{amsthm}

\setlength{\topmargin}{0cm} \setlength{\oddsidemargin}{0.5cm}
\setlength{\evensidemargin}{0.5cm} \pagestyle{plain}
\textwidth=16cm \textheight=22cm

\newtheorem{theorem}{Theorem}[section]
\newtheorem{lemma}[theorem]{Lemma}
\newtheorem{proposition}[theorem]{Proposition}
\newtheorem{corollary}[theorem]{Corollary}

\theoremstyle{definition}
\newtheorem*{definition}
{Definition}

\newtheorem*{Index Convention}{Index Convention}
\newtheorem*{notation}{Notation}

\def\keywords#1{\par\medskip
\noindent\textbf{Keywords.} #1}

\def\subjclass#1{{\renewcommand{\thefootnote}{}%
\footnote{\emph{Mathematics Subject Classification (2010):} #1}}}

\begin{document}
\let\le=\leqslant
\let\ge=\geqslant
\let\leq=\leqslant
\let\geq=\geqslant
\newcommand{\e}{\varepsilon }
\newcommand{\f}{\varphi }
\newcommand{ \g}{\gamma}
\newcommand{\F}{{\Bbb F}}
\newcommand{\N}{{\Bbb N}}
\newcommand{\Z}{{\Bbb Z}}
\newcommand{\Q}{{\Bbb Q}}
\newcommand{\C}{{\Bbb C}}
\newcommand{\R}{\Rightarrow }
\newcommand{\W}{\Omega }
\newcommand{\w}{\omega }
\newcommand{\s}{\sigma }
\newcommand{\hs}{\hskip0.2ex }
\newcommand{\ep}{\makebox[1em]{}\nobreak\hfill $\square$\vskip2ex }
\newcommand{\Lr}{\Leftrightarrow }

\title{Lie algebras with Frobenius dihedral groups of automorphisms}

\markright{}

\author{{\sc N.\,Yu.~Makarenko}\\ \small Sobolev Institute of Mathematics, Novosibirsk, 630\,090,
Russia }

\author{
{N.\,Yu.~Makarenko\footnote{The work is supported by  Russian
Science Foundation, project 21-11-00286}}\\
\small  Sobolev Institute of Mathematics, Novosibirsk, 630\,090,
Russia
\\[-1ex] \small  natalia\_makarenko@yahoo.fr}
\date{}
\maketitle \subjclass{Primary 17B40, 17B30, 17B70; Secondary
17B65}

\begin{abstract}
 Suppose that a Lie algebra $L$  admits a finite Frobenius group of automorphisms $FH$ with
cyclic kernel $F$   and complement $H$ of order 2, such that the
fixed-point subalgebra of $F$ is trivial and the fixed-point
subalgebra of $H$ is metabelian.  Then the  derived length of $L$
is bounded by a constant.
\end{abstract}

\keywords{Lie algebras, Frobenius group, dihedral group of
automorphisms, graded, solvable}

\section{Introduction}

Let $L$ be a Lie algebra, $G$ its group of automorphisms. We
denote the fixed-point subalgebra of $G$ in $L$ by $$C_L(G)=\{l\in
L \,\,\,\mid \,\,\, l^{\varphi}=l \text{ for all } \varphi\in G\}.
$$

 Suppose
that  $L$ admits a Frobenius group of automorphisms $FH$ with
kernel $F$ and complement $H$ of order $q$ such that the
fixed-point subalgebra of $F$ is trivial, i.e. $C_L(F)=0$. In
\cite{khu-ma-shu} it was proved, that if the fixed-point
subalgebra  $C_L(H)$ of $H$ is nilpotent of class~$c$, then $L$ is
nilpotent of class bounded by a function depending only on $q$ and
$c$.

\vskip1ex
 In this paper we make the first attempt to extend this
result to the case of  solvability of $C_L(H)$ (instead of
nilpotency). We consider a Frobenius dihedral group acting on a
Lie algebra in such a way that $C_L(F)=0$ and  $C_L(H)$  is
metabelian, and prove the solvability of bounded length of the
whole algebra.

\begin{theorem}\label{th1}
 Suppose that a Lie algebra $L$  of arbitrary   dimension admits a finite Frobenius group of automorphisms $FH$ with
cyclic kernel $F$  and complement $H$ of order 2, such that the
fixed-point subalgebra of $F$ is trivial and the fixed-point
subalgebra of $H$ is metabelian.  Then the  derived length of $L$
is bounded by a constant.
\end{theorem}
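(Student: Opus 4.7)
The strategy is a two-step reduction to the nilpotency theorem of~\cite{khu-ma-shu}. First I extend the ground field to contain a primitive $n$-th root of unity $\omega$, where $n=|F|$; since scalar extension does not decrease the derived length, one may assume $\omega$ lies in the ground field. Fixing a generator $f\in F$, the eigenspace decomposition $L=\bigoplus_{i\in\Z/n\Z}L_i$ is a $\Z/n\Z$-grading with $[L_i,L_j]\subseteq L_{i+j}$, and the hypothesis $C_L(F)=0$ forces $L_0=0$, whence $[L_i,L_{-i}]=0$ for all $i$. Since $FH$ is Frobenius dihedral, the involution $h\in H$ inverts $F$, so $h$ interchanges $L_i$ and $L_{-i}$. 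Assuming $\mathrm{char}\ne 2$, decomposing $L=L^+\oplus L^-$ into $\pm 1$-eigenspaces of $h$ yields $L^+=C_L(H)$, metabelian by hypothesis, with both $L^\pm$ inheriting the $F$-grading.

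The first reduction passes to the quotient by the $FH$-invariant ideal $I\triangleleft L$ generated by $A:=[L^+,L^+]$, which is abelian. In $\bar L:=L/I$ the $F$-grading descends with $(\bar L)_0=0$, so $C_{\bar L}(F)=0$; the semisimple $H$-action gives the decomposition $\bar L\cong L^+/(I\cap L^+)\oplus L^-/(I\cap L^-)$, so $C_{\bar L}(H)=L^+/(I\cap L^+)$ is a quotient of $L^+/A$ and therefore abelian. Applying~\cite{khu-ma-shu} with $q=2$ and $c=1$, $\bar L$ is nilpotent of class at most an absolute constant $c_0$, so $L^{(d_0)}\subseteq I$ for an absolute $d_0$, and it remains to bound the derived length of $I$.

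For the second step I would aim to show that $C_I(H)$ is nilpotent of absolute bounded class, so that a second application of~\cite{khu-ma-shu} to $I$ (with $FH$-action inherited, and $C_I(F)\subseteq C_L(F)=0$) yields $I$ nilpotent of absolute bounded class, hence solvable of bounded length. The plan is to exploit the bi-grading (by $F$ and by $H$) together with the identity $[L_i,L_{-i}]=0$: every element of $I$ is a sum of iterated brackets $[a,y_1,\ldots,y_k]$ with $a\in A$ and $y_j\in L$, and after expanding each $y_j$ into its bi-graded components and using the symmetrization $x_i\mapsto x_i+x_i^h\in L^+$, one shows that sufficiently long iterated brackets of elements of $C_I(H)$ can be rewritten as sums of brackets containing a vanishing sub-bracket $[a',a'']=0$ with $a',a''\in A$.

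The main obstacle is precisely this last combinatorial step: propagating the abelian relation $[A,A]=0$ through brackets that mix $L^+$ and $L^-$ components in the $F$-graded picture, while obtaining a bound uniform in $n=|F|$. Both the involutory nature of $h$, enabling the symmetrization $x_i+x_i^h\in L^+$, and the identity $[L_i,L_{-i}]=0$ coming from $C_L(F)=0$, must be used in concert; this is where the dihedral order-$2$ hypothesis on $H$ enters essentially, since the symmetric swap $L_i\leftrightarrow L_{-i}$ under $h$ is what makes the reduction from arbitrary bi-graded elements to $H$-fixed ones possible.
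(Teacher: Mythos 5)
Your setup (scalar extension, eigenspace grading with $L_0=0$, $h$ swapping $L_i$ and $L_{-i}$) matches the paper's, and your first reduction is plausible modulo two small points: $C_L(H)$ is not $F$-invariant (only $F$ is normal in $FH$), so $I$ must be the ideal generated by all $FH$-conjugates of $A=[L^+,L^+]$ --- harmless, since $C_{\bar L}(H)$ is still the image of $L^+$ and hence abelian --- and the decomposition $L=L^+\oplus L^-$ needs characteristic different from $2$. The genuine gap is your second step, which is where the entire content of the theorem lives and which you leave as a plan. The sub-goal you set yourself --- that $C_I(H)$ is nilpotent of absolute bounded class --- is both unsupported and almost certainly the wrong target: all the hypotheses give you is that $C_I(H)\subseteq C_L(H)$ is metabelian (indeed $[C_I(H),C_I(H)]\subseteq A$ is abelian), and ``$FH$ acts with $C(F)=0$ and $C(H)$ metabelian'' is exactly the situation of the theorem you are trying to prove, so this step is essentially circular. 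Moreover nothing in your sketch addresses the real difficulty, which is uniformity in $n$: Kreknin's theorem already gives solvability of $n$-bounded derived length for free, and the whole point is to remove the dependence on $n$.

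The paper takes a different route precisely to avoid having to prove that any large ideal is nilpotent. It encodes the metabelian hypothesis on $C_L(H)$ as a purely combinatorial ``selective metabelian condition'' on the $\Z/n\Z$-graded algebra, namely $\big[[L_{d_1},L_{d_2}],[L_{d_3},L_{d_4}]\big]=0$ for $(-1)$-independent index tuples; shows that the quotient of $M=[L,L]$ by the ideal $J$ generated by the surviving products is metabelian (so $L^{(3)}\leq J$); and then --- this is the key step --- proves that each generating ideal $T$ of $J$ has only a constant number of nontrivial grading components (Lemmas \ref{l4}, \ref{l5}, Corollary \ref{c6}). Shalev's strengthening of Kreknin's theorem then bounds the derived length of $M/C_M(T)$ by a constant $f_1$, so $M^{(f_1)}$ centralizes $J$ and $[L^{(f_1+1)},L^{(3)}]=0$, giving the constant bound. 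If you want to salvage your two-step outline you would still need an analogue of this component-counting argument for your ideal $I$; the symmetrization $x_i+x_{-i}$ and the relation $[L_i,L_{-i}]=0$ that you mention are indeed the right raw ingredients, but by themselves they do not produce a bound independent of $n$.
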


The proof is based on combinatorial calculus in a $\Z/n\Z$-graded
Lie algebra for $n$ being the order of the Frobenius kernel $F$.
Such a $\Z/n\Z$-graded algebra arises naturally, as we extend the
ground field by a primitive $n$th root $\omega$ of 1 and consider
the eigenspaces $L_i=\{x\in L \mid x^{\varphi}=\omega^i x\}$ for
the eigenvalues $\omega^i$ of the automorphism $\varphi$, a
generator of $F$. These subspaces satisfy the properties:
$$[L_i, L_j]\subseteq L_{i+j\,(\rm{mod}\,n)}\qquad \text{and}\qquad L= \bigoplus
_{i=0}^{n-1}L_i,$$  and thus form  a $(\Bbb Z /n\Bbb Z )$-grading.
It is clear that  $L_0=C_L(F)$. The assumptions of Theorem
\ref{th1} can be translated into the language of graded algebras
by introducing two notions: so called ``(-1)-independent
sequence'' and  the ``selective metabelian condition''.  We call a
sequence $(a_1, a_2, a_3, a_4)$, $a_i\in\Z/n\Z$,
\textit{$(-1)$-independent}, if for any $(t_1, t_2,t_3, t_4)$,
$t_i\in\{0,1\}$,
$$t_1a_{1}+t_2a_{2}+t_3a_{3}+t_4a_{4}=0 \,\,\,\,\,\Rightarrow \,\,\,\, (t_1, t_2,t_3,
t_4)= (0,0,0, 0). $$ We say that a $(\Z/n\Z)$-graded Lie algebra
$L$ satisfies the \textit{selective metabelian condition} if
$$
\big[[L_{d_1},L_{d_2}],[L_{d_{3}}, L_{d_{4}}]\big]=0\quad \text{
whenever } (d_1,d_2, d_3,d_{4}) \text{ is $(-1)$-independent}.
$$

To demonstrate Theorem \ref{th1} it suffices to prove the
solvability of bounded length of a $(\Z/n\Z)$-graded Lie algebra
with $L_0=0$ satisfying selective metabelian condition (see
section \ref{section-reduction}).

\vskip1ex The general idea  and many arguments  are borrowed from
\cite{khu-ma-shu} and \cite{ma-shu}. In particular, in
\cite{khu-ma-shu} and \cite{ma-shu} there were already obtained
some analogues of Lemmas  \ref{l4}, \ref{l5}, \ref{l7} and
Corollary \ref{c6}.


\section{Preliminaries}

If $M,\, N$ are subspaces of
   $L$, then $[M,N]$
denotes the subspace, generated  by all the products $[m,n]$ for
$m\in M$, $n\in N$. If $M$ and $N$ are ideals, then $[M,N]$ is
also an ideal; if $H$ is a (sub)algebra, then $[H,H]$ is  an ideal
of $H$ and, in particular, its subalgebra.


\vskip1ex
 A simple product  $[a_1,a_2,a_3,\ldots, a_s]$ is by
definition the left-normalized product
$$[...[[a_1,a_2],a_3],\ldots, a_s].$$

\vskip1ex
 Note that any (complex) product in  elements in $L$  can be expressed as a linear combination of simple products of
the same length in the same elements.  Also, it follows that the
ideal in  $L$ generated by a  subspace $S$ is the subspace
generated by all the  simple products  $[y_j,x_{i_1},
x_{i_2},\ldots x_{i_t}]$, where $t\in \Bbb N$ and $x_{i_k}\in L,
y_j\in S$. In particular, if $L$ is generated by a subspace $M$,
then  its space is spanned  by simple products in elements of~$M$.

 \vskip1ex
 The derived series of an algebra $L$ is defined as
$$L^{(0)}=L, \; \; \; \, \, \, \, \, \,
L^{(i+1)}=[L^{(i)},L^{(i)}].$$ Then $L$ is solvable of derived
length at most $n$ if $L^{(n)}=0$. An algebra of derived length~2
is called \textit{metabelian}.

 \vskip1ex






  A Lie algebra $L$ over a field
is \textit{ $({\Bbb Z}/n{\Bbb Z})$-graded} if
$$L=\bigoplus_{i=0}^{n-1}L_i\qquad \text{ and }\qquad [L_i,L_j]\subseteq L_{i+j\,({\rm mod}\,n)},$$
where  $L_i$ are subspaces  of~$L$. Elements of $L_i$ are referred
to as \textit{homogeneous} and the subspaces $L_i$  are called
\textit{homogeneous components} or  \textit{grading components}.
In particular, $L_0$ is called the zero homogeneous component or
simply zero component.

 \vskip1ex

An additive subgroup $H$ of $L$ is called \textit{homogeneous} if
$H=\bigoplus_i (H\cap L_i)$ and we set $H_i=H\cap L_i$. Obviously,
any subalgebra or an ideal generated by homogeneous subspaces is
 homogeneous. A homogeneous subalgebra  can be regarded as a
$({\Bbb Z} /n{\Bbb Z} )$-graded algebra with the induced grading.
It follows that the terms of the derived series of $L$, the ideals
$L^{(k)}$  are also $({\Bbb Z} /n{\Bbb Z} )$-graded algebras with
induced grading
$$L^{(k)}_i=L^{(k)}\cap L_i.$$

\begin{Index Convention}
Henceforth a small Latin letter with an index $i\in \Bbb Z/n\Bbb
Z$ will denote a homogeneous element in the grading component
$L_i$, with the index only indicating which component this element
belongs to: $x_i\in L_i$. We will not be using numbering indices
for elements of the $L_i$, so that different elements can be
denoted by the same symbol when it only matters which component
the elements belong to. For example, $x_{i}$ and $x_{i}$ can be
different elements of $L_{i}$, so that $[x_{i},\, x_{i}]$ can be a
non-zero element of $L_{2i}$.
\end{Index Convention}

\section{Reduction to $\Z/n\Z$-graded Lie algebras
with \\metabelian selection condition}\label{section-reduction}

Let $L$ be   a Lie  algebra $L$ that satisfies the hypothesis of
Theorem \ref{th1}. Denote a generator of the Frobenius kernel $F$
by $\varphi$.  Let $n$ be the order of $\varphi$.   If the ground
field $\Bbb K$ does not contains a primitive $n$th root $\omega$
of 1, we extend $K$ by $\omega$ and denote the resulting Lie
algebra by $\widetilde L$. The group $FH$ acts  on $\widetilde L$
this action inherits  the conditions that $C_{\widetilde L}(F)=0$
and  $C_{\widetilde L}(H)$ is metabelin. Thus, we can assume that
$L=\widetilde L$ and the ground field contains~$\omega$.

\vskip1ex We consider the eigenspaces $L_i=\{x\in L \mid
x^{\varphi}=\omega^i x\}$, $i=0,\ldots, n-1$, of $\varphi$, for
the eigenvalues $\omega^i$. One can verify that
$$[L_i, L_j]\subseteq L_{i+j\,(\rm{mod}\,n)}\qquad \text{and}\qquad L= \bigoplus _{i=0}^{n-1}L_i,$$
so this is a $(\Bbb Z /n\Bbb Z )$-grading. We also have
$L_0=C_L(F)=0$.

\vskip1ex  Let $h$ be a generator  of $H$. By definition of the
Frobenius group $C_H(f) = 1$ for every non-identity $f$ in $F$. It
follows that $h$ acts on $F$ without non-trivial fixed points.
Hence,  $\varphi^{h^{-1}} = \varphi^{-1}$ and 2 does not divide
$n$.

\vskip1ex The group $H$ permutes the components $L_i$ in the
following way: $L_i^h=L_{-i}$.  In fact, if $x_i\in L_i$, then
$(x_i^h)^{\varphi}=x_i^{h\varphi
h^{-1}h}=(x_i^{\varphi^{-1}})^h=\omega^{-i}x_{i}^h.$

\vskip1ex In what follows, to simplify the notations, (under the
Index Convention) we will denote $(x_s)^{h}$ by $x_{-s}$ for
$x_s\in L_s$.

\vskip1ex Let $x_{a_1}, x_{a_2}, x_{a_3}, x_{a_4}$ be homogeneous
elements in $L_{a_1},L_{a_2}, L_{a_3},L_{a_{4}}$ respectively.
Consider the sums
\begin{align*}
X_1&=x_{a_1}+x_{-a_1},\\
\vdots&\\
X_{4}&=x_{a_{4}}+x_{-a_{4}}.
\end{align*}
Since all of them lie in subalgebra $C_L(H)$, which is metabelian,
it follows that
$$\big[[X_1,X_2],[X_3,X_4]\big]=0.$$
We expand the expressions to obtain on the left a linear
combination of products in the $x_{\pm a_i}$, which in particular
involves the term $\big[[x_{a_1},x_{a_2}],[x_{a_3},
x_{a_{4}}]\big]$. Suppose that the product
$\big[[x_{a_1},x_{a_2}],[x_{a_3}, x_{a_{4}}]\big]$ is non-zero.
Then there must be other terms in the expanded expression that
belong to the same component $L_{a_1+a_2+a_3+a_{4}}$, that is
$$a_{1}+a_{2}+a_{2}+a_{4}=\epsilon_1a_{1}+\epsilon_2a_{2}+\epsilon_3a_{3}+\epsilon_{4}a_{4}$$
for some $\epsilon_i\in\{1,-1\}$ not all of which are 1, or
equivalently (since $n$ is odd),
$$t_1a_{1}+t_2a_{2}+t_3a_{3}+t_{4}a_{4}=0$$
for some $t_i\in\{0,1\}$ not all of which are 0.

\vskip1ex This leads us to the followings definitions.

\begin{definition}
Let $a_1,\dots,a_k$ be not necessarily distinct non-zero elements
of $\Bbb Z/n\Bbb Z$. We say that the sequence $(a_1,\dots,a_k)$ is
\textit{$(-1)$-dependent} if
$$t_1a_{1}+t_2a_{2}+t_3a_{3}+t_4a_{4}=0$$
for some $t \in\{0,1\}$ not all of which are 0. If the sequence
$(a_1,\dots,a_k)$ is not $(-1)$-dependent, i.e.
 if for any $(t_1, t_2,t_3, t_4)$,
$t_i\in\{0,1\}$,
$$t_1a_{1}+t_2a_{2}+t_3a_{3}+t_4a_{4}=0 \,\,\,\,\,\Rightarrow \,\,\,\, (t_1, t_2,t_3,
t_4)= (0,0,0, 0), $$
 we call it \textit{$(-1)$-independent}.
\end{definition}


\begin{definition}
  We say that a $(\Z/n\Z)$-graded Lie  algebra $L$ satisfies
the \textit{selective metabelian condition} if, under the Index
Convention,
\begin{equation}\label{select}
\big[[x_{d_1},x_{d_2}],[x_{d_{3}}, x_{d_{4}}]\big]=0\quad \text{
whenever } (d_1,d_2. d_3,d_{4}) \text{ is $(-1)$-independent}.
\end{equation}
 \end{definition}

 To demonstrate
Theorem \ref{th1}  it suffices to prove the solvability of bounded
length of a $(\Z/n\Z)$-graded Lie algebra with trivial
zero-component satisfying the selective metabelian condition.

\section{Proof}

We begin with some technical  Lemmas.

\begin{notation}
For a given $(-1)$-independent sequence $(a_1,\dots,a_k) $ we
denote by $D(a_1,\dots,a_k)$ the set of all $j\in\Bbb Z/n\Bbb Z$
such that $(a_1,\dots,a_k,j)$ is $(-1)$-dependent.
\end{notation}

\begin{notation} For a arbitrary sequence $(b_1,\dots,b_s)$ we denote by
$\widetilde D (b_1, \ldots, b_s)$ the set of all linear
combinations of the form $u_1 b_1+\cdots+u_k b_k$ with $u_i\in
\{0,\pm 1,\pm 2\}$.
\end{notation}

\begin{lemma}\label{l1} Suppose that a $(\Z/n\Z)$-graded Lie algebra $L$ with $L_0=0$, $n$
odd, satisfies the selective metabelian condition~\eqref{select}.
Let $a_1, a_2, a_3, a_4\in \Bbb Z/n\Bbb Z$ such that $(a_1, a_2,
a_3)$ is an $(-1)$-independent sequence and $a_4\in D(a_1,
a_2,a_3)$.  Then (under the Index Convention)
$$\big[[x_{a_1},x_{a_2}],[x_{a_3},x_{a_4}],
x_b\big]=0$$ for all $x_{b}$ with $b\notin \widetilde D (a_1, a_2,
a_3, a_4)$.
\end{lemma}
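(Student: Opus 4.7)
The plan is to apply the Jacobi identity by treating the operation $[\,\cdot\,, x_b]$ as a derivation of $L$, so that
\[
\bigl[[x_{a_1}, x_{a_2}], [x_{a_3}, x_{a_4}], x_b\bigr] \;=\; T_1 + T_2 + T_3 + T_4,
\]
where $T_i$ is obtained from the bracket $\bigl[[x_{a_1}, x_{a_2}], [x_{a_3}, x_{a_4}]\bigr]$ by replacing its $i$-th factor $x_{a_i}$ with $[x_{a_i}, x_b] \in L_{a_i + b}$. Each $T_i$ is then a commutator of the form $\bigl[[u, v], [p, q]\bigr]$ of four homogeneous elements, with index 4-tuple equal to $(a_1, a_2, a_3, a_4)$ but with $b$ added to the $i$-th entry.

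I would then show each $T_i$ vanishes by the selective metabelian condition. The key observation is that any non-trivial $\{0,1\}$-linear combination of the shifted tuple summing to zero must have coefficient $0$ at the shifted entry; otherwise, isolating $b$ writes it as a $\{0,\pm 1\}$-combination of $a_1,\ldots,a_4$, placing $b$ into $\widetilde D(a_1,a_2,a_3,a_4)$ and contradicting the hypothesis. Hence $(-1)$-dependence of the shifted 4-tuple reduces to $(-1)$-dependence of the complementary triple of unshifted $a_j$-indices. For $T_4$, this triple is $(a_1, a_2, a_3)$, which is $(-1)$-independent by hypothesis, so $T_4 = 0$.

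For $T_1, T_2, T_3$ the complementary triples each contain $a_4$. Combining a hypothetical $(-1)$-dependence of such a triple with the relation $a_4 = -t_1 a_1 - t_2 a_2 - t_3 a_3$ provided by $a_4 \in D(a_1, a_2, a_3)$ forces some partial sum of indices from $\{a_1, a_2, a_3, a_4\}$ to vanish: either a pair-sum $a_j + a_4 = 0$, or a triple-sum $a_j + a_k + a_4 = 0$, or a mixed-sign relation among $(a_1, a_2, a_3)$ that is still compatible with $(-1)$-independence. In each configuration the corresponding vanishing bracket lies in $L_0 = 0$, and a further Jacobi rearrangement of $T_i$ exposes this zero factor, rewriting $T_i$ as a sum of selective-metabelian-shape terms with $(-1)$-independent 4-tuples, forcing $T_i = 0$. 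The oddness of $n$, which ensures $2 a_j \neq 0$ whenever $a_j \neq 0$, is used throughout to rule out spurious coincidences.

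The main obstacle is the case analysis in the previous paragraph: enumerating the finitely many configurations indexed by $(t_1, t_2, t_3) \in \{0,1\}^3 \setminus \{(0,0,0)\}$ together with the possible $(-1)$-dependencies of the unshifted sub-triples, and checking that each admits a Jacobi rearrangement exposing a vanishing partial-sum bracket in $L_0 = 0$. This combinatorial tracking of $(-1)$-dependencies, rather than a single clever identity, is the essential content of the lemma.
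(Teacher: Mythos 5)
Your opening moves are sound: distributing $x_b$ as a derivation gives $\big[[x_{a_1},x_{a_2}],[x_{a_3},x_{a_4}],x_b\big]=T_1+T_2+T_3+T_4$ with $T_i$ obtained by replacing $x_{a_i}$ by $[x_{a_i},x_b]$, and your observation that any nontrivial $\{0,1\}$-relation on the shifted index tuple must have coefficient $0$ at the shifted entry (else $b\in\widetilde D(a_1,a_2,a_3,a_4)$) is correct and does dispose of $T_4$. This is a genuinely different decomposition from the paper's, which keeps $x_b$ outermost, writes the product as $-\big[[x_b,[x_{a_1},x_{a_2}]],[x_{a_3},x_{a_4}]\big]+\big[[x_b,[x_{a_3},x_{a_4}]],[x_{a_1},x_{a_2}]\big]$, and then repeatedly expands the \emph{inner} brackets of the initial segment $\big[[x_{a_1},x_{a_2}],[x_{a_3},x_{a_4}]\big]$.

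There is, however, a genuine gap in your plan for $T_1,T_2,T_3$: you propose to show that each $T_i$ vanishes \emph{individually}, and that cannot work in the hardest configurations. The paper's case analysis terminates in situations such as $a_1=a_2=a_3=a$, $a_4=-2a$ (and likewise $a_1=-2a_3$, $a_2=-a_3$, $a_4=a_3$), where what is actually proved is that the initial segment $\big[[x_{a_1},x_{a_2}],[x_{a_3},x_{a_4}]\big]$ is itself zero; this makes the \emph{sum} $T_1+T_2+T_3+T_4=[0,x_b]$ vanish but gives no control over the individual $T_i$. Concretely, take $a_1=a_2=a_3=a$, $a_4=-2a$ with $o(a)>3$, so that $(a,a,a)$ is $(-1)$-independent and $-2a\in D(a,a,a)$. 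Then $T_1=\big[[[x_a,x_b],x_a],[x_a,x_{-2a}]\big]$ has index tuple $(a+b,\,a,\,a,\,-2a)$, which is $(-1)$-dependent via $a+a-2a=0$, so the selective metabelian condition does not apply; and the Jacobi rearrangement you describe, which peels off the factor $[x_a,[x_a,x_{-2a}]]\in L_0$, leaves the residue $\big[[[x_a,x_b],[x_a,x_{-2a}]],x_a\big]=-\big[x_b,[x_a,x_{-2a}],x_a,x_a\big]$, whose intermediate indices $b$, $b-a$, $b$, $b+a$ are all nonzero and whose relevant $4$-tuples remain $(-1)$-dependent, so nothing in your toolkit forces it to vanish. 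The same phenomenon (residual terms with still-$(-1)$-dependent tuples) occurs, for instance, for $T_1$ when $a_4=-a_2$ and $a_2=a_1+a_3$. So the step ``a further Jacobi rearrangement exposes the zero factor, rewriting $T_i$ as a sum of selective-metabelian-shape terms with $(-1)$-independent $4$-tuples'' is not available, and the case analysis you defer is not merely laborious: carried out along your lines it does not close. To repair the argument you would need to work with the sum of the $T_i$ rather than term by term --- equivalently, keep $x_b$ outside as the paper does --- so that in the terminal cases you can invoke the vanishing of the initial segment itself.
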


\begin{proof}  Suppose by contradiction that
\begin{equation}\label{l1-f0}\big[[x_{a_1},x_{a_2}],[x_{a_3},x_{a_4}],
x_b\big]\end{equation}
 is not trivial. We rewrite \eqref{l1-f0} using the
Jacobi identity
 and anti-commutativity
$$\Big[[x_{a_1},x_{a_2}],[x_{a_3},x_{a_4}], x_b\Big]=
$$
$$\Big[\big[x_{a_1},x_{a_2},x_b\big],
\big[x_{a_3},x_{a_4}\big]\Big]+ \Big[\big[x_{a_1},x_{a_2}\big],
\big[x_{a_3},x_{a_4}, x_b\big]\Big]=$$
\begin{equation}\label{l1-f1}-\Big[\big[x_b,[x_{a_1},x_{a_2}]\big],
\big[x_{a_3},x_{a_4}\big]\Big]+
\end{equation}
\begin{equation}\label{l1-f2}\Big[\big[x_b
, [x_{a_3},x_{a_4}]\big],\big[x_{a_1},x_{a_2}\big]\Big].
\end{equation}
By the selective metabelian condition  the  summand \eqref{l1-f1}
is not trivial only if  the sequence $(b, a_1+a_2,  a_3, a_4)$ is
$(-1)$-dependent, i.e. there is a tuple $(t_1, t_2, t_3, t_4)\neq
(0,0,0,0)$ with $t_i\in \{0,1\}$, such that
$$t_1b+
t_2(a_1+a_2)+t_3 a_3+t_4 a_4=0.$$  We consider all possible
4-tuples $(t_1, t_2, t_3, t_4)$ with $t_i\in \{0,1\}$. If $t_1\neq
0$, then $b$ is a linear combination of elements $(a_1+a_2),\,
a_3, \, a_4$ with coefficients from $\{0,-1\}$. It follows that
$b\in \widetilde D (a_1, a_2, a_3, a_4)$ that contradicts the
assumption. Thus,~$t_1=0$. If among the $t_i$ only one integer is
non zero, the corresponding index and consequently the element
with this index is trivial (since $L_0=0$), that contradicts
non-triviality of \eqref{l1-f0}. The tuples  $(0,0,1,1)$, $(0, 1,
1,1)$ give accordingly the triviality of the subproducts
$[x_{a_3}, x_{a_4}]$ and
$\Big[[x_{a_1},x_{a_2}],[x_{a_3},x_{a_4}]\Big]$ in \eqref{l1-f0},
since they belong to $L_0=0$. It remains to consider only 2
following tuples: $(0,1,1,0)$, $(0,1,0,1)$. The tuple $(0,1,1,0)$
gives $a_1+a_2+a_3=0$ that is impossible since $a_1, a_2, a_3$ are
$(-1)$-independent.   Thus, \eqref{l1-f0} and~\eqref{l1-f1} are
both not trivial only if $(t_1,t_2,t_3,t_4)=(0,1,0,1)$ and
$\boldsymbol{a_4=-a_1-a_2}.$

\vskip1ex In the same way, we determine under what condition the
products \eqref{l1-f0} and \eqref{l1-f2}   are simultaneously not
 trivial. The sequence $(b, a_3+a_4, a_1, a_2)$ should be
$(-1)$-dependent. The only tuples that do not imply the triviality
of  \eqref{l1-f2} are $(0,1,0,1)$ and  $(0,1,1,0)$. This
corresponds respectively to
$$\boldsymbol{a_4=-a_2-a_3} \text{ and } \boldsymbol{a_4=-a_1-a_3}.$$
These two  conditions are actually equivalent and one can be
obtained from the other  by interchanging $a_1$ and $a_2$ due to
anti-commutativity.

\vskip1ex We will examine separately the two cases: $a_4=-a_1-a_2
$ and $a_4=-a_1-a_3.$

\vskip2ex

 {\bf Case: $\boldsymbol{a_4=-a_1-a_2}$}.

 We substitute $a_4=-a_1-a_2$
in \eqref{l1-f0},   rewrite it as the sum of \eqref{l1-f1} and
\eqref{l1-f2} and expand the inner product in \eqref{l1-f1}
 by the Jacobi identity:

$$\Big[[x_{a_1},x_{a_2}],[x_{a_3},x_{-a_1-a_2}], x_b\Big]=$$
$$-\Big[\big[x_b,[x_{a_1},x_{a_2}]\big],
\big[x_{a_3},x_{-a_1-a_2}\big]\Big]+ \Big[\big[x_b ,
[x_{a_3},x_{-a_1-a_2}]\big],\big[x_{a_1},x_{a_2}\big]\Big]=$$
\begin{equation}\label{l1-f4}=-\Big[\big[\boldsymbol{ x_b}, x_{a_1},x_{a_2}\big],
\big[x_{a_3},x_{-a_1-a_2}\big]\Big]+\end{equation}
\begin{equation}\label{l1-f5} \Big[\big[\boldsymbol{ x_b},x_{a_2},x_{a_1}\big],
\big[x_{a_3},x_{-a_1-a_2}\big]\Big]+\end{equation}
\begin{equation}\label{l1-f6}\Big[\big[x_b ,
[x_{a_3},x_{-a_1-a_2}]\big],\big[x_{a_1},x_{a_2}\big]\Big].\end{equation}
The product \eqref{l1-f0}  is not trivial only if at least one of
the following 3 sequences of indices is $(-1)$-dependent:
\begin{equation}\label{l1-f8}\big((b+a_1), a_2,a_3,(-a_1-a_2)\big),\end{equation}
\begin{equation}\label{l1-f9} \big((b+a_2), a_1, a_3, (-a_1-a_2)\big),\end{equation}
\begin{equation}\label{l1-f11} \big(b, (a_3-a_1-a_2), a_1,
a_2)\big).
\end{equation}
 We consider all possible tuples $(t_1, t_2,
t_3, t_4)$ with $t_i\in \{0,1\}$ and investigate the corresponding
equations on the indices. If $t_1\neq 0$, then $b$ is a linear
combination of $a_1$, $a_2$, $a_3$ with coefficients belonging to
$\{ 1,-1 \}$ that contradicts the hypothesis of the lemma. Suppose
that $t_1=0$. If among the $t_i$ only one integer is not zero, the
product is trivial, since the corresponding element in the
products \eqref{l1-f4}, \eqref{l1-f5}, \eqref{l1-f6} belongs to
$L_0=0$. If $(t_1, t_2, t_3, t_4)=(0,0,1,1)$ then the products
equals to 0, since the sum of the indices of the second subproduct
is 0. It remains to consider the tuples $(0,1,1,0), (0,1,0,1),
(0,1,1,1)$.

\vskip1ex  Let  $(t_1, t_2, t_3, t_4)=(0,1,1,0)$. Then $a_2+a_3=0$
in  \eqref{l1-f8}; $a_1+a_3=0$ in \eqref{l1-f9},
$\boldsymbol{a_3=a_2}$ in~\eqref{l1-f11}.
 The first two equations are impossible,
 since $a_1$, $a_2$, $a_3$ are $(-1)$-independent.

\vskip1ex
 If
$(t_1, t_2, t_3, t_4)=(0,1,0,1)$, then
 $a_1=0$ in \eqref{l1-f8}; $a_2=0$
\eqref{l1-f9}; $\boldsymbol{a_3=a_1}$ in \eqref{l1-f11}. The first
and the second equalities imply respectively that  $x_{a_1}\in
L_0=0$ or $x_{a_2}\in L_0=0$ , and thus, \eqref{l1-f0} is trivial.

\vskip1ex For $(t_1, t_2, t_3, t_4)=(0,1,1,1)$ we have
$\boldsymbol{a_3=a_1}$ in \eqref{l1-f8}; $\boldsymbol{a_3=a_2}$ in
\eqref{l1-f9}; $a_3=0$ in \eqref{l1-f11} that corresponds to the
trivial product.

\vskip1ex Thus,  if $a_4=-a_1-a_2$, \eqref{l1-f0} is not trivial
only if $\boldsymbol{a_3=a_1}$ or $\boldsymbol{a_3=a_2}$. These
cases are equivalent as one can be obtained from the other  by
interchanging $a_1$ and $a_2$.

\vskip1ex

Let $\boldsymbol{a_2=a_3}$. In \eqref{l1-f0} we replace  $a_4$ by
$-a_1-a_2$ and $a_3$ by $a_2$, represent the obtained product as
the sum of \eqref{l1-f1} and \eqref{l1-f2} and expand the fist
subproducts in both summands by the Jacobi identity:
$$\Big[[x_{a_1}, x_{a_2}],\big[x_{a_2},x_{-a_1-a_2}\big], x_b
\Big]=$$
\begin{equation}\label{l1-f12}-\Big[[x_b, x_{a_1},
x_{a_2}],[x_{a_2},x_{-a_1-a_2}]\Big]+\end{equation}
\begin{equation}\label{l1-f13}\Big[[x_b, x_{a_2}, x_{a_1}],[x_{a_2},x_{-a_1-a_2}]\Big]+\end{equation}
\begin{equation}\label{l1-f14}\Big[[x_b, x_{a_2}, x_{-a_1-a_2}],[x_{a_1},x_{a_2}]\Big]+\end{equation}
\begin{equation}\label{l1-f15}
-\Big[[x_b, x_{-a_1-a_2},
x_{a_2}],[x_{a_1},x_{a_2}]\Big].
\end{equation}
In \eqref{l1-f13}
we can  move the element $x_{a_1}\in L_{a_i}$ to the right over
the product $[x_{a_2},x_{-a_1-a_2}]\in L_{-a_1}$, since $[L_{a_i},
L_{-a_i}]=0$. We obtain
$$
\Big[[x_b, x_{a_2},
\boldsymbol{x_{a_1}}],[x_{a_2},x_{-a_1-a_2}]\Big]=\Big[[x_b,
x_{a_2}],[x_{a_2},x_{-a_1-a_2}], \boldsymbol{x_{a_1}}\Big].
$$
In the same way, in \eqref{l1-f14} we move the element
$x_{-a_1-a_2}$ to the right over the product $[x_{a_1},x_{a_2}]\in
L_{a_1+a_2}$ to get
$$
\Big[[x_b, x_{a_2},
\boldsymbol{x_{-a_1-a_2}}],[x_{a_1},x_{a_2}]\Big]=
\Big[[x_b,
x_{a_2}],[x_{a_1},x_{a_2}], \boldsymbol{x_{-a_1-a_2}}\Big].
$$
It suffices to prove that the products \eqref{l1-f12},
\eqref{l1-f15}, $\Big[[x_b, x_{a_2}],[x_{a_2},x_{-a_1-a_2}]\Big]$,
$\Big[[x_b, x_{a_2}],[x_{a_1},x_{a_2}]\Big]$ are trivial. The
corresponding sequences of indices are:
\begin{equation}\label{l1-f18}\Big((b+a_1), a_2, a_2,(-a_1-a_2)\Big)
\end{equation}
\begin{equation}\label{l1-f19}\Big((b-a_1-a_2), a_2, a_1, a_2\Big)
\end{equation}
\begin{equation}\label{l1-f20}\Big(b, a_2,  a_2, (-a_1-a_2)\Big)
\end{equation}
\begin{equation}\label{l1-f21}\Big(b, a_2, a_1, a_2\Big).
\end{equation}
We consider  all possible 4-tuples $(t_1, t_2, t_3, t_4)$, $t_i\in
\{0, 1\}$, and the solutions of the equations
$t_1y_1+t_2y_2+t_3y_3+t_4y_4=0$ for $(y_1, y_2, y_3, y_4)$ being
successively  \eqref{l1-f18}, \eqref{l1-f19}, \eqref{l1-f20} and
\eqref{l1-f21}.

\vskip1ex If $t_1\neq 0$, then $b\in \widetilde D(a_1, a_2, a_3,
a_4)$ for all the sequences under consideration. This contradicts
the hypothesis of the lemma. Thus, $t_1=0$. As we have already
noticed above, a tuple $(t_1, t_2, t_3, t_4)$ with only one
non-zero value implies the triviality of the product, as well as
the tuple $(0,0,1,1)$. It remains to consider the tuples
$(0,1,1,0)$, $(0,1,0,1)$, $(0,1,1,1)$.

Let $(t_1, t_2, t_3, t_4)=(0,1,1,0).$ Then $2a_2=0$ in
\eqref{l1-f18} and \eqref{l1-f20}, that corresponds to the trivial
product as $n$ is odd;  $a_1+a_2=0$ in \eqref{l1-f19} and
\eqref{l1-f21}, which is impossible  since $a_1, a_2$ are
$(-1)$-independent.

\vskip1ex If $(t_1, t_2, t_3, t_4)=(0,1,0,1),$ then $-a_1=0$ in
\eqref{l1-f18} and \eqref{l1-f20};  $2a_2=0$ in \eqref{l1-f19} and
\eqref{l1-f21}. Both these conditions  imply the triviality of the
corresponding products.

\vskip1ex Let now  $(t_1, t_2, t_3, t_4)=(0,1,1,1).$ Then
$\boldsymbol{a_2=a_1}$ in \eqref{l1-f18};
$\boldsymbol{2a_2+a_1=0}$ in \eqref{l1-f19} and \eqref{l1-f21};
$-a_1=0 $ in \eqref{l1-f20}, which corresponds to the trivial
product.

\vskip1ex It follows that  either $\boldsymbol{a_1=a_2}$, or
$\boldsymbol{a_1=-2a_2}$. Hence, \eqref{l1-f0} has the initial
segment either of the form
$$\Big[[x_{a_2},x_{a_2}],[x_{a_2}, x_{-2a_2}]\Big] $$ or of the
form
$$\Big[[x_{-2a_2},x_{a_2}],[x_{a_2}, x_{a_2}]\Big].$$
The first product is trivial since
$$\Big[[x_{a_2},\boldsymbol{x_{a_2}}],[x_{a_2}, x_{-2a_2}]\Big]=
\Big[\big[x_{a_2},[x_{a_2}, x_{-2a_2}]\big],
\boldsymbol{x_{a_2}}\Big]+
\Big[x_{a_2},\big[\boldsymbol{x_{a_2}},[x_{a_2},
x_{-2a_2}]\big]\Big]
$$
and the subproducts $\big[x_{a_2},[x_{a_2}, x_{-2a_2}]\big]$,
$\big[\boldsymbol{x_{a_2}}, [x_{a_2}, x_{-2a_2}]\big]$ have zero
sum of indices. The second product is actually the same as the
first one due to anti-commutativity.

\vskip2ex {\bf Case: $\boldsymbol{a_4=-a_1-a_3}$}.

\vskip1ex We substitute $-a_1-a_3$ instead of $a_4$ in
\eqref{l1-f0}, rewrite it as the sum of \eqref{l1-f1} and
\eqref{l1-f2} and expand the first inner product in \eqref{l1-f2}
by the Jacobi identity:

\begin{equation}
\big[[x_{a_1},x_{a_2}], [x_{a_3},x_{a_4}], x_b\big]=
\big[[x_{a_1},x_{a_2}], [x_{a_3},x_{-a_1-a_3}]
x_b\big]=
\end{equation}
\begin{equation}=-\Big[\big[x_b,[x_{a_1},x_{a_2}]\big],
\big[x_{a_3},x_{-a_1-a_3}\big]\Big]\end{equation}
\begin{equation} +\big[[ x_b,x_{a_3},x_{-a_1-a_3}],
[x_{a_1},x_{a_2}]\big]
\end{equation}
\begin{equation} -\big[[ x_b,x_{-a_1-a_3}, x_{a_3}],
[x_{a_1},x_{a_2}]\big].
\end{equation}
The corresponding sequences of indices are:
\begin{equation}\label{l1-f25}
\big(b, (a_1+a_2), a_3, (-a_1-a_3)\big),
\end{equation}
\begin{equation}\label{l1-f26}
\big((b+a_3), (-a_1-a_3), a_1, a_2\big),
\end{equation}
\begin{equation}\label{l1-f27}
\big((b-a_1-a_3), a_3, a_1, a_2\big).
\end{equation}
As in the previous arguments we need to consider only the tuples
$(0,1,1,0)$, $(0,1,0,1)$, $(0,1,1,0)$, since the other tuples lead
to the triviality of  all the  products under consideration.

\vskip1ex Let $(t_1, t_2, t_3, t_4)=(0,1,1,0)$. Then
$a_1+a_2+a_3=0$ for \eqref{l1-f25}, which is impossible since
$a_1, a_2,a_3$ are $(-1)$-independent;
 $a_3=0$ for \eqref{l1-f26}, which implies the triviality of the product; $a_1+a_3=0$ for
\eqref{l1-f27}, which is impossible, since $a_1, a_3$ are
$(-1)$-independent.

\vskip1ex If $(t_1, t_2, t_3, t_4)=(0,1,0,1)$, then we have
$\boldsymbol{a_2=a_3}$ for  \eqref{l1-f25};
$\boldsymbol{a_2=a_1+a_3}$ for \eqref{l1-f26}; $a_3+a_2=0$ for
\eqref{l1-f27}, which is impossible, since $a_1, a_3$ are
$(-1)$-independent.

\vskip1ex The last case  $(t_1, t_2, t_3, t_4)=(0,1,1,1)$ gives:
$a_2=0$ for \eqref{l1-f25}, which implies the triviality of the
product; $\boldsymbol{a_2=a_3}$ for \eqref{l1-f26};
$a_1+a_2+a_3=0$ for \eqref{l1-f27}, which is impossible, since
$a_1, a_2, a_3$ are $(-1)$-independent.

\vskip1ex If $\boldsymbol{a_2=a_3}$, the product \eqref{l1-f0}
becomes $\big[[x_{a_1},x_{a_2}], [x_{a_2},x_{-a_1-a_2}],
x_b\big]$. It was already considered above.

\vskip1ex It remains the case $\boldsymbol{a_2=a_1+a_3}$. We
substitute $a_4=-a_1-a_3, a_2=a_1+a_3$ in \eqref{l1-f0}, represent
\eqref{l1-f0} as the sum of \eqref{l1-f1} and \eqref{l1-f2} and
expand the inner brackets in the first inner product of
\eqref{l1-f2} by the Jacobi identity:
$$\big[[x_{a_1},
x_{a_2}],[x_{a_3},x_{a_4}], x_b \big]=\big[[x_{a_1},
x_{a_1+a_3}],[x_{a_3},x_{-a_1-a_3}], x_b \big]=
$$
\begin{equation}\label{l1-f28}-\Big[\big[x_b, [x_{a_1},
x_{a_1+a_3}]\big],[x_{a_3},x_{-a_1-a_3}]\Big]
\end{equation}
$$+\big[[x_b, x_{a_3},
\boldsymbol{x_{-a_1-a_3}}],[x_{a_1},x_{a_1+a_3}]\big]
$$
\begin{equation}\label{l1-f29}-\big[[x_b, x_{-a_1-a_3}, x_{a_3},],[x_{a_1},x_{a_1+a_3}]\big].
\end{equation}
In the second summand we move the element
$\boldsymbol{x_{-a_1-a_3}}$ to the right over
$[x_{a_1},x_{a_1+a_3}]$ by the Jacobi identity:
$$\big[[x_b, x_{a_3},
\boldsymbol{x_{-a_1-a_3}}],[x_{a_1},x_{a_1+a_3}]\big]$$
$$=\big[[x_b, x_{a_3}],[x_{a_1},x_{a_1+a_3}],
\boldsymbol{x_{-a_1-a_3}}\big].
$$
\begin{equation}\label{l1-f31}+\Big[\big[x_b, x_{a_3}],\big[[x_{a_1},x_{a_1+a_3}], \boldsymbol{x_{-a_1-a_3}}\big]\Big]
\end{equation}
It suffices to prove that the products \eqref{l1-f28},
\eqref{l1-f29}, \eqref{l1-f31} and $\big[[x_b,
x_{a_3}],[x_{a_1},x_{a_1+a_3}]\big]$ are trivial. The
corresponding sequences of indices  are
\begin{equation}\label{l1-f32}\big(b, (2a_1+a_3), a_3,
(-a_1-a_3)\big),
\end{equation}
\begin{equation}\label{l1-f35} \big((b-a_1-a_3), a_3, a_1,
(a_1+a_3)\big),
\end{equation}
\begin{equation}\label{l1-f34}  \big(b, a_3, (2a_1+a_3),
(-a_1-a_3)\big),
\end{equation}
\begin{equation}\label{l1-f33} \big(b, a_3, a_1, (a_1+a_3)\big).
\end{equation}
As in the previous arguments it suffices to consider  the tuples
$(0,1,1,0)$, $(0,1,0,1)$, $(0,1,1,1)$.

 \vskip1ex If $(t_1,
t_2, t_3, t_4)=(0,1,1,0)$, then the conditions of non-triviality
of the corresponding product are: $2a_1+2a_3=0$ in \eqref{l1-f32}
and \eqref{l1-f34}; $a_3+a_1=0$ in \eqref{l1-f35} and
\eqref{l1-f33}. Both equations are impossible since $a_1$ and
$a_2$ are $(-1)$-independent and $n$ is odd.

\vskip1ex If $(t_1, t_2, t_3, t_4)=(0,1,0,1)$, then the conditions
of non-triviality of the corresponding product are: $a_1=0$ in
\eqref{l1-f32} and  \eqref{l1-f34} which implies the triviality of
the corresponding commutator; $\boldsymbol{2a_3+a_1=0} $ in
\eqref{l1-f35} and \eqref{l1-f33}.

\vskip1ex If $(t_1, t_2, t_3, t_4)=(0,1,1,1)$, then the conditions
of non-triviality of the corresponding product are: $a_1+a_3=0$ in
\eqref{l1-f32}, \eqref{l1-f34}; $2a_1+2a_3=0$ in \eqref{l1-f35},
\eqref{l1-f33}. The both conditions are  impossible since $a_1$,
$a_3$ are $(-1)$-independent and $n$ is odd.

Thus, the product \eqref{l1-f0} can be  non-trivial only if
$a_1=-2a_3$. Then  $a_2=a_1+a_3=-a_3$, $a_4=-a_1-a_3=a_3$ and
$$\Big[[x_{a_1}, x_{a_2}],[x_{a_3}, x_{a_4}]\Big]=
\Big[[x_{-2a_3}, x_{-a_3}],[x_{a_3},
x_{a_3}]\Big]=
$$
$$=\Big[x_{-2a_3},[x_{a_3}, x_{a_3}], x_{-a_3}
\Big]+\Big[x_{-2a_3}, \big[x_{-a_3},[x_{a_3},
x_{a_3}]\big]\Big].$$ The first summand is trivial since it
contains an initial segment with zero sum of indices. The inner
product $\big[x_{-a_3},[x_{a_3}, x_{a_3}]\big]$ of the second
summand can be represented as the sum of two products of the form
$ \big[x_{-a_3}, x_{a_3}, x_{a_3}\big]$ by expanding the brackets
by the Jacobi identity. As $ \big[x_{-a_3}, x_{a_3}, x_{a_3}\big]$
also contains an initial segment with zero sum of indices, the
second summand is trivial and, consequently, \eqref{l1-f0} is
trivial as well.
\end{proof}

\vskip2ex

\begin{lemma}\label{l2} Suppose that a $(\Z/n\Z)$-graded Lie algebra $L$ with $L_0=0$, $n$
odd, satisfies the selective metabelian condition~\eqref{select}.
Let $c,a,b\in \Bbb Z/n\Bbb Z$, $n$ odd,
 $o(a)>3$. Then (under the Index
Convention)
$$\Big[x_{c}, \underbrace{x_{a}, x_{a}, \ldots, x_{a}}_{7}\Big]=0$$
for all $x_c\in L_c$, $x_a=[x_{b}, x_{a-b}]\in [L_b, L_{a-b}]$.

\end{lemma}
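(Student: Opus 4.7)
The plan is to exploit the explicit form $x_a = [x_b, x_{a-b}]$ together with Lemma \ref{l1} and the selective metabelian condition \eqref{select}. I would begin by substituting each of the seven occurrences of $x_a$ in the product by $[x_b^{(i)}, x_{a-b}^{(i)}]$ (with possibly distinct $x_b^{(i)} \in L_b$ and $x_{a-b}^{(i)} \in L_{a-b}$). By multilinearity the desired identity reduces to the vanishing of
$$\bigl[x_c,\ [x_b^{(1)}, x_{a-b}^{(1)}],\ [x_b^{(2)}, x_{a-b}^{(2)}],\ \ldots,\ [x_b^{(7)}, x_{a-b}^{(7)}]\bigr].$$

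\vskip1ex
Next I would apply the Jacobi identity iteratively, via
$$[X, [x_b, x_{a-b}]] = [X, x_b, x_{a-b}] - [X, x_{a-b}, x_b],$$
to push the inner brackets outward, and then use further Jacobi moves to repackage length-$4$ segments as $[[\cdot,\cdot],[\cdot,\cdot]]$. This rewrites the original product as a linear combination of left-normalized products whose initial five-term segment has the shape $\bigl[[x_{a_1}, x_{a_2}], [x_{a_3}, x_{a_4}], x_{a_5}\bigr]$ required by Lemma \ref{l1}.

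\vskip1ex
For each summand I would then apply Lemma \ref{l1}, or \eqref{select} directly. The hypothesis $o(a)>3$ combined with $n$ odd gives $o(a)\ge 5$, so $ka\ne 0$ for $k\in\{1,2,3,4\}$; in particular the sequence $(a,a,a,a)$ is $(-1)$-independent, and \eqref{select} yields $\bigl[[x_a,x_a],[x_a,x_a]\bigr]=0$. Together with Lemma \ref{l1} applied with $(a_1,a_2,a_3,a_4)$ chosen from the indices $c,b,a-b,a$, this should handle the bulk of the summands, the pool of seven $x_a$-factors providing ample candidates for the outer index $x_{a_5}$ lying outside $\widetilde D(a_1,a_2,a_3,a_4)$.

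\vskip1ex
The main obstacle will be the case analysis for exceptional values of the indices: when $b\in\{-a,2a\}$, or when $c$ satisfies relations such as $c+kb=0$ for small $k$, certain index sequences become $(-1)$-dependent and the direct arguments fail. Treating these exceptional cases requires further Jacobi rearrangements that eat into the remaining $x_a$-factors; the number $7$ in the statement is presumably tuned precisely so that enough factors remain after these reshufflings to close the argument in every exceptional situation.
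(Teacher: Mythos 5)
There is a genuine gap: your sketch never identifies the mechanism that actually makes $7$ factors suffice, and it explicitly defers the hard cases with ``presumably tuned.'' The paper's proof does not expand all seven occurrences of $x_a$ at all. Instead it treats each prefix $\big[x_c,\underbrace{x_a,\dots,x_a}_{k}\big]$ ($k=0,\dots,5$) as a single homogeneous element $x_{c+ka}$ and writes only the \emph{next-to-last} two factors as $\big[[x_{c+ka},x_a],[x_b,x_{a-b}]\big]$, which is already literally of the shape governed by the selective metabelian condition~\eqref{select} --- no Jacobi rearrangement and no Lemma~\ref{l1} are needed for this step. Non-vanishing then forces the sequence $\big((c+ka),a,b,(a-b)\big)$ to be $(-1)$-dependent for \emph{every} $k=0,\dots,5$. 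When $(a,b,a-b)$ is $(-1)$-independent, the dependency relation must involve $c+ka$, which pins $c+ka$ into the four-element set $\{-b,\,-a+b,\,-a-b,\,-2a+b\}$; but since $o(a)\ge 5$ the six values $c,c+a,\dots,c+5a$ take at least five distinct values, a pigeonhole contradiction. This arithmetic-progression pigeonhole is the whole point of the number $7$, and it is absent from your proposal.

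Your alternative plan --- substitute $[x_b,x_{a-b}]$ for all seven factors, push brackets around by Jacobi, and invoke Lemma~\ref{l1} --- is not carried out and is unlikely to close as stated: Lemma~\ref{l1} requires the first three indices to form a $(-1)$-independent sequence with the fourth in $D(a_1,a_2,a_3)$, and you never verify these hypotheses for the segments your rewriting would produce; the observation $\big[[x_a,x_a],[x_a,x_a]\big]=0$ does not obviously apply to any subproduct of the left-normalized product $[x_c,x_a,\dots,x_a]$. Moreover, the exceptional cases you flag, $b=2a$ and $b=-a$, are exactly where the remaining substance of the paper's proof lies: there one writes $x_a=[x_{2a},x_{-a}]$, reruns the same dependency analysis to conclude $c+ka\in\{-4a,-3a,-2a,-a,a,2a\}$, and checks that each possibility forces an initial segment of $X$ with zero index sum, hence $X=0$ since $L_0=0$. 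Without the pigeonhole step and without these cases worked out, the proposal is a plan, not a proof.
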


Note that under the Index Convention the elements $x_a$ in the
product above can be different.

\begin{proof} We denote $[x_{c}, \underbrace{x_{a}, x_{a}, \ldots, x_{a}}_{k}]$,
$k=0,1,\ldots,5$, by $x_{c+ka}$ under the Index Convention.
Suppose by contradiction that $$X=\big[x_{c}, \underbrace{x_{a},
x_{a}, \ldots, x_{a}}_7\big] \neq 0.$$ It follows that for all
$k=0,1,2,\ldots, 5$ the products
\begin{equation}\label{l2-f0} \big[x_{c}, \underbrace{x_{a}, x_{a},
\ldots,
x_{a}}_k, x_a, x_a\big]=\big[[x_{c+ka}, x_a],\, [x_{b},\, x_{a-b}]
\big] \end{equation}
 are not trivial as well. Hence, the
corresponding sequences of indices
$$\big((c+ka),a,b,(a-b)\big)$$ for $k=0,1,2,\ldots, 5$
are all $(-1)$-dependent, i.e. for each $k=1,2,\ldots, 5$  there
are $t_1, t_2, t_3, t_4,\in \{0,1\}$ (depending on $k$) such that
$(t_1,t_2, t_3, t_4)\neq (0,0,0,0)$ and
$$t_1(c+ka)+t_2a+t_3b+t_4(a-b)=0.$$

Suppose  first that $\big(a, b, (a-b)\big)$ is $(-1)$-independent,
i.e. $t_1=1$.
 If $(t_1, t_2, t_3, t_4)=(1,0,0,0), (1,1,0,0)$, (1,0,1,1) or
$(1,1,1,1)$, then \eqref{l2-f0} is trivial since it  contains an
inner product with zero sum of indices. If  $(t_1,t_2, t_3,
t_4)=(1,0,1,0)$, then $c+ka=-b$. If $(t_1,t_2, t_3,
t_4)=(1,0,0,1)$, then $c+ka=-a+b$. If $(t_1, t_2, t_3,
t_4)=(1,1,1,0)$, then $c+ka=-a-b$. If $(t_1,t_2, t_3, t_4)=(1,
1,0,1)$, then $c+ka=-2a+b$.

\vskip1ex

 It follows that for $k=0,1,\ldots, 5$ $c+ka\in \{(-b),(-a+b), (-a-b),
 (-2a+b)\}$. It is impossible, since $o(a)$ is actually greater than
 4 (as 2 does not divide $n$) and $c+ka$ takes at least 5 different
 values for $k=0,\ldots, 5$.






\vskip1ex

\vskip1ex Let now $a, b, a-b$ be $(-1)$-dependent. By definition
there are $t_1, t_2, t_3\in \{0,1\}$ such that
$t_1a+t_2b+t_3(a-b)=0$ and $(t_1, t_2, t_3)\neq (0,0,0).$ We
consider all possible 3-tuple $(t_1, t_2, t_3)$. Each of the
tuples $(1,0,0), (0,1,0), (0,0,1), (0,1,1), (1,1,1)$ implies the
triviality of \eqref{l2-f0}. For $(1,0,1)$ we obtain
$\boldsymbol{2a-b=0}$ and for $(1,1,0)$ $\boldsymbol{a+b=0}$.

\vskip1ex Consider the case $\boldsymbol{b=2a}$. Then
$x_{a}=[x_{2a}, x_{-a}]$ and
$$\Big[[x_{c+ak}, x_{a}], [x_{b},x_{a-b}]\Big]=\Big[x_{c+ak}, [x_{2a}, x_{-a}], [x_{2a},x_{-a}]\Big]
=$$
$$ \Big[[x_{c+ak}, x_{2a}, \boldsymbol{x_{-a}}],
[x_{2a},x_{-a}]\Big]$$
 \begin{equation}\label{l1-f36}-\Big[[x_{c+ak}, x_{-a}, x_{2a}], [x_{2a},x_{-a}]\Big].
 \end{equation}
 In the
first summand we move $\boldsymbol{x_{-a}}$ to the right over the
$[x_{2a},x_{-a}]\in L_{a}$ to get
\begin{equation}\label{l1-f37}\Big[\big[[x_{c+ak}, x_{2a}],
[x_{2a},x_{-a}]\big], x_{-a}\Big].
\end{equation}
(The  additional term, which arises by the Jacobi identity, is
trivial as it contains an inner product
$\big[[\boldsymbol{x_{-a}},[x_{2a},x_{-a}]\big]\in L_{0}$.) We
determine under which conditions \eqref{l1-f36} or \eqref{l1-f37}
could be non-trivial. The sequence $\big((c+ka-a), 2a,
2a,(-a)\big)$ or $\big((c+ka), 2a, 2a,(-a)\big)$ should be
(-1)-dependent.  As in the previous arguments, considering  all
possible tuples $(t_1, t_2, t_3, t_4)$, $t_i\in\{0,1\}$, and
equations
$$t_1(c+ka-a)+t_2\cdot 2a+t_3\cdot 2a+t_4\cdot (-a)=0,
\,\,\,\,\,\,\,\,\,\,t_1(c+ka)+t_2\cdot 2a+t_3\cdot 2a+t_4\cdot
(-a)=0,
$$ we obtain that $$c+ka\in \{(-4a),(-3a),(-2a), (-a), a, 2a\}$$ for $t_1=1$   and $3a=0$ for
$t_1=0$. The case of $o(a)=3$ is impossible by assumption. If
$c\in \{(-4a),(-3a),(-2a),(-a)\}$, then the product
$$X=\big[x_c, \underbrace{x_a,\ldots, x_a}_7\big]$$ is trivial as it
 contains an inner product with zero sum of indices. If $c=a$ or
$c=2a$, then respectively  $c+2a$ or $c+a$ is equal to $3a$, which
belongs to the set $$\{(-4a),(-3a),(-2a), (-a), a, 2a\}.$$ It
follows that $o(a)=5$ or $7$. In this case the product $X$ is also
trivial since there is an initial segment with zero sum of
indices.

\vskip1ex
 The case where $\boldsymbol{b_1=-a}$ is actually the same as the
previous one, since $x_{a}=[x_{-a}, x_{2a}]=-[x_{2a}, x_{-a}]$.

\vskip1ex We have proved that in all the cases $X$ is trivial that
contradicts our assumption.
\end{proof}


\begin{lemma}\label{l3} Suppose that a $(\Z/n\Z)$-graded Lie algebra $L$ with $L_0=0$, $n$
odd,  satisfies the following selective condition (under the Index
Convention):
\begin{equation}\label{select2}
\big[[x_{d_1},x_{d_2}], [x_{d_3}, x]\big]=0\quad \text{ whenever }
(d_1,d_2,d_{3}) \text{ is $(-1)$-independent.}
\end{equation}
 Then  $L$
is metabelian.
\end{lemma}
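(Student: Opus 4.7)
The plan is to show that $[[x_a,x_b],[x_c,x_d]]=0$ for all $a,b,c,d\in\Z/n\Z$, which gives $L^{(2)}=0$. First I would exploit hypothesis \eqref{select2} together with anti-commutativity: inside each bracket and between the two brackets, $[[x_a,x_b],[x_c,x_d]]$ coincides up to sign with $[[x_{a'},x_{b'}],[x_{c'},x_{d'}]]$ for every permutation preserving the pair structure $\{\{a,b\},\{c,d\}\}$. Hence \eqref{select2} kills the product whenever any one of the four triples $(a,b,c)$, $(a,b,d)$, $(c,d,a)$, $(c,d,b)$ is $(-1)$-independent, and $L_0=0$ kills it whenever one of $a,b,c,d,a+b,c+d$ is zero. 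It therefore remains to treat the case in which none of these six elements vanishes and all four triples are $(-1)$-dependent.

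Under these restrictions, $(-1)$-dependence of $(a,b,c)$ forces $c\in\{-a,-b,-a-b\}$, and similarly for $d$. Running through the nine resulting $(c,d)$-possibilities and imposing $(-1)$-dependence of $(c,d,a)$ and $(c,d,b)$ as well, one checks that modulo the symmetries $a\leftrightarrow b$, $c\leftrightarrow d$, and pair-swap, only five configurations survive: (A) $(a,a,-a,-a)$, (B) $(a,2a,-a,-a)$, (C) $(a,b,-a,-b)$, (D) $(a,a,-a,-2a)$ with $o(a)>3$, and (E) $(a,a,a,a)$ with $o(a)=3$.

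Cases (A) and (C) are immediate, since the sum of the four indices is $0$ and the product lies in $L_0=0$. Case (E) follows from the grading alone, as $[[L_a,L_a],L_a]\subseteq L_{3a}=L_0=0$, so the outer bracket vanishes after Jacobi expansion. In case (D), the Jacobi identity combined with $[L_a,L_{-a}]\subseteq L_0=0$ shows that $[[x_a,x_a],x_{-a}]=0$ and, similarly, $[[x_a,x_a],x_{-2a}]=0$ (each Jacobi expansion produces only inner brackets with zero index sum); expanding the outer bracket in $[[x_a,x_a],[x_{-a},x_{-2a}]]$ then yields zero.

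The main obstacle is case~(B). With $u,v\in L_{-a}$ denoting the two factors of the second bracket, $[x_a,u]\in L_0=0$ reduces $[[x_a,x_{2a}],u]$ via Jacobi to $[x_a,[x_{2a},u]]$, and analogously for $v$. A second Jacobi application together with $[x_a,v]\in L_0=0$ gives $[[x_a,[x_{2a},u]],v]=[x_a,[[x_{2a},u],v]]$ and the analogous identity with $u,v$ swapped. Subtracting these and applying Jacobi to $[[x_{2a},u],v]-[[x_{2a},v],u]=[x_{2a},[u,v]]$, the product $[[x_a,x_{2a}],[u,v]]$ collapses to $[x_a,[x_{2a},[u,v]]]$. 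Since $[u,v]\in L_{-2a}$, the inner bracket $[x_{2a},[u,v]]$ lies in $L_0=0$, so the product vanishes. This completes the proof that $L$ is metabelian.
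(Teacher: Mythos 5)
Your proof is correct and follows essentially the same route as the paper: use \eqref{select2}, anticommutativity and $L_0=0$ to force $c,d\in\{-a,-b,-a-b\}$ and reduce to a short list of index configurations, then kill each one by Jacobi manipulations and zero index sums. Your symmetrized filter (requiring all four triples $(a,b,c)$, $(a,b,d)$, $(c,d,a)$, $(c,d,b)$ to be $(-1)$-dependent) yields a slightly cleaner five-case enumeration, and the derivation-style collapse of $\big[[x_a,x_{2a}],[u,v]\big]$ to $\big[x_a,[x_{2a},[u,v]]\big]$ in your case (B) is tidier than the paper's expansion, but the underlying argument is the same.
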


\begin{proof} Let $a, b, c,d\in \Bbb Z/n\Bbb Z$, $x_a\in L_a, x_{b}\in L_b, x_c\in L_c, x_d\in
L_d$. It suffices to prove that
$$Y=\big[[x_{a},x_{b}],[x_{c},x_{d}]\big]= 0.$$
 Suppose by contradiction that $Y\neq 0.$  Note that if $(a,b)$ or $(c,d)$ is $(-1)$-dependent, then
respectively $a=-b$ or $c=-d$. It follows that  $Y=0,$ since it
contains a subproduct  with zero  sum of indices. Therefore, we
can assume that $(a,b)$ and $(c,d)$ are $(-1)$-independent.
 If $c$ or $d$ does not belong to
$D(a, b)$, then by  \eqref{select2} the product $Y$ is trivial.
Thus, we can suppose that $c,d\in D(a, b)$. This means that $c,
d\in\{-a, -b, -a-b\}$.

\vskip1ex

 We will consider all possible cases for the pair
$(c,d)$.

 \vskip1ex
{\bf Case: $\boldsymbol{c=-a}$, $\boldsymbol{d=-a}$}. If the
sequence $\big((-a), (-a), b\big)$ is $(-1)$-independent, then
$Y=0$ by \eqref{select2}.  Suppose that $\big((-a), (-a), b\big)$
is $(-1)$-dependent. It follows  that $\boldsymbol{b=a}$ or
$\boldsymbol{b=2a}$. In the  case where $b=a$ we have
$$Y=
\big[[x_{a},x_{a}],[x_{-a},x_{-a}]\big]=0,
$$
 and in the case where $b=2a$ we have
\begin{equation}\label{l3-1}
Y=\big[[x_{a},\boldsymbol{x_{2a}}],[x_{-a},x_{-a}]\big]=
\big[x_{a},[x_{-a},x_{-a}], \boldsymbol{x_{2a}}\big] +\Big[x_{a},
\big[\boldsymbol{x_{2a}},[x_{-a},x_{-a}]\big]\Big].
\end{equation}
Expanding the inner brackets of the subproducts
$\big[x_{a},[x_{-a},x_{-a}]\big]$ and
$\big[\boldsymbol{x_{2a}},[x_{-a},x_{-a}]\big]$ by the Jacobi
identity in the above terms, we obtain a linear combinations of
products, each of them contains an initial segment with zero sum
of indices. Hence, \eqref{l3-1} is trivial.

\vskip1ex {\bf Case: $\boldsymbol{c=-a}$, $\boldsymbol{d=-a-b}$}.
If $\big((-a), (-a-b), b\big)$ is $(-1)$-independent, then $Y=0$
by \eqref{select2}.  If $\big((-a), (-a-b), b\big)$ is
$(-1)$-dependent, then $b=a$ (recall that $\big(c,d)=((-a),
(-a-b)\big)$ is $(-1)$-independent).
 Thus,
$$Y=\big[[x_{a},x_{a}],[x_{-a},x_{-2a}]\big]=
-\big[[x_{-a},x_{-2a}],[x_{a},x_{a}]\big].
$$
If we replace $a$ by $-a$, we get the product \eqref{l3-1}
considered above. Thus, $Y$ is trivial  in this case as well.

\vskip1ex {\bf Case: $\boldsymbol{c=-a-b}, \boldsymbol{d=-a-b}$}.
As in the previous cases we can suppose that $\big((-a-b), (-a-b),
b\big)$ is $(-1)$-dependent. Then $b=-2a$ and
$$Y=\big[[x_{a},x_{-2a}],[x_{a},x_{a}]\big].$$ Expanding the inner
brackets by the Jacobi identity we get a linear combination of
products with initial segments from $L_0$. Therefore, $Y$ is zero
as well.

 \vskip1ex

 The case of $\boldsymbol{c=-a}, \boldsymbol{d=-b}$ is trivial,
 since the sum of indices in $Y$ is equal to zero in this case.

\vskip1ex
 The remaining cases are the same as treated above by
interchanging $a$, $b$ or $c$ and $d$.

\end{proof}

\begin{lemma}\label{l4}
Suppose that a $(\Z/n\Z)$-graded Lie algebra $L$ with $L_0=0$, $n$
odd, satisfies the selective metabelian condition~\eqref{select}.
Let $u_{d_1},u_{d_2},u_{d_3}, u_{d_4}$  be homogeneous elements of
$L$ (under the Index Convention), such that  the sequence of
indices $(d_1,d_2, d_3)$ is $(-1)$-independent and $d_4\in D(d_1,
d_2, d_3)$.  Then (under the Index Convention) every product of
the form
\begin{equation}\label{eq4}
\big[[u_{d_1},u_{d_2}],[u_{d_3},u _{d_4}],
x_{i_1},\dots,x_{i_t}\big],
\end{equation}
where the $x_{i_k}=[x_{w_k}, x_{i_k-w_k}]$, $k=1,\ldots, t$, are
homogeneous products of length~2,
 can be
written as a linear combination of products of the form
\begin{equation}\label{eq5}
\big[[u_{d_1},u_{d_2}],[u_{d_3},u_{d_4}],
m_{j_1},\dots,m_{j_{s}}\big],
\end{equation}
where $s\leq t$, $m_{j_k}=[m_{u_{k}}, m_{j_{k}-u_k}]$ such that
 $j_k \in \widetilde D (d_1, d_2, d_3, d_4)$ and at least one of the indices $u_{k}$ or
$j_{k}-u_k$ belongs to $\widetilde D (d_1, d_2, d_3, d_4)$ as
well.
\end{lemma}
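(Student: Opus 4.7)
The plan is to induct on $t$. The base case $t=0$ is immediate: the product $\big[[u_{d_1},u_{d_2}],[u_{d_3},u_{d_4}]\big]$ is already of the required form with $s=0$. For brevity I will write $X=\big[[u_{d_1},u_{d_2}],[u_{d_3},u_{d_4}]\big]$, so that $P=[X, x_{i_1},\dots,x_{i_t}]$. I will call a length-2 factor $y=[x_w,x_{i-w}]$ of degree $i$ \emph{good} if $i\in\widetilde D(d_1,d_2,d_3,d_4)$ and at least one of $w$, $i-w$ also lies in $\widetilde D(d_1,d_2,d_3,d_4)$, and \emph{bad} otherwise. When every $x_{i_k}$ is good, setting $m_{j_k}:=x_{i_k}$ already exhibits $P$ in the desired form with $s=t$; so I may assume that some bad factor $x_{i_k}$ exists and aim to reduce $t$ by one.

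The technical core will be the sub-claim that $[X, x_{i_k}]=0$ whenever $x_{i_k}$ is bad. If $i_k\notin\widetilde D$, this is precisely Lemma~\ref{l1} applied to the homogeneous element $x_{i_k}$. Otherwise $i_k\in\widetilde D$ but both $w_k,\,i_k-w_k\notin\widetilde D$, and I would expand $x_{i_k}=[x_{w_k},x_{i_k-w_k}]$ by the Jacobi identity to obtain
$$[X, x_{i_k}] \;=\; [X, x_{w_k}, x_{i_k-w_k}] - [X, x_{i_k-w_k}, x_{w_k}].$$
Each summand contains an inner subproduct $[X, x_b]$ with $b\in\{w_k,\,i_k-w_k\}\subset(\Z/n\Z)\setminus\widetilde D$, which vanishes by Lemma~\ref{l1}.

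Next, using the identity $[A,a,b]=[A,b,a]+[A,[a,b]]$ iteratively, I would commute $x_{i_k}$ leftward past $x_{i_{k-1}},\dots,x_{i_1}$. This writes $P$ as the sum of a main term $[X, x_{i_k}, x_{i_1},\dots,x_{i_{k-1}}, x_{i_{k+1}},\dots,x_{i_t}]$, which vanishes by the sub-claim, and $k-1$ correction terms of the form
$$\big[X,\; x_{i_1},\dots,x_{i_{j-1}},[x_{i_j},x_{i_k}],x_{i_{j+1}},\dots,x_{i_{k-1}},x_{i_{k+1}},\dots,x_{i_t}\big],\qquad j=1,\dots,k-1.$$
The crucial observation is that each correction term has exactly $t-1$ homogeneous length-2 factors after $X$: the merged bracket $[x_{i_j},x_{i_k}]$ is itself a length-2 product with coordinates $i_j$ and $i_k$, so it fits the inductive framework. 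Applying the inductive hypothesis at $t-1$ to each correction term then writes $P$ as a linear combination of products of the required form with $s\leq t-1\leq t$, completing the inductive step.

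The hard part will be the sub-claim together with verifying that the commutation produces correction terms whose factors are all length-2 products of the shape demanded by the lemma; in particular one must notice that $[x_{i_j},x_{i_k}]$ still qualifies as such a factor (with coordinates $i_j,i_k$) even when $x_{i_j}$ and $x_{i_k}$ are themselves length-2 products of elementary homogeneous elements. Once that is in place the argument is a clean reduction, using Lemma~\ref{l1} as the zero-ing mechanism.
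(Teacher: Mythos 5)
Your proof is correct and follows essentially the same route as the paper: the same use of Lemma~\ref{l1} (applied directly when $i_k\notin\widetilde D$, and after a Jacobi expansion of $[x_{w_k},x_{i_k-w_k}]$ otherwise) to annihilate a bad factor sitting next to $U$, and the same Jacobi-swap decomposition whose correction terms merge two factors into a new length-2 factor and are strictly shorter, hence handled by induction on $t$. The only difference is organizational: you commute the bad factor to the front in a single pass and so avoid the paper's second induction parameter (the position of the first bad element), which is a mild streamlining rather than a different argument.
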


\begin{proof}

\vskip1ex Set $U=\big[[u_{d_1},u_{d_2}],[u_{d_3},u_{d_4}]\big]$.
We use induction on $t$. If $t=0$, \eqref{eq4} has the required
form. If $t=1$ and $i_1\notin \widetilde D (d_1, d_2, d_3, d_4)$,
then \eqref{eq4} is equal to zero by Lemma~\ref{l1}. Suppose that
$i_1\in \widetilde D (d_1, d_2, d_3, d_4)$. If either $w_1$ or
$i_1-w_1$ belongs to $\widetilde D (d_1, d_2, d_3, d_4)$, the
product \eqref{eq4} has the required form.  Otherwise, we expand
the brackets of the subproduct $[x_{w_1},x_{i_1-w_1}]$ by the
Jacobi identity:
$$
\big[U, x_{i_1}\big]=
 \big[U,
 [x_{w_1},x_{i_1-w_1}]\big]=
\big[U, x_{w_1},x_{i_1-w_1}\big]-\big[U, x_{i_1-w_1},
x_{w_1}\big].
$$
The both summands are equal to zero by Lemma \ref{l1}.

\vskip1ex Assume that $t>1$. If all the indices $i_k$ belong to
$\widetilde D(d_1, d_1, d_3, d_4)$ and for each $k$ either $w_k$
or $i_k-w_k \in \widetilde D(d_1, d_1, d_3, d_4)$, then the
product \eqref{eq4} is of the required form with $s=t$. Suppose
that in \eqref{eq4} there is an element $x_{i_k}$ such that
\begin{itemize}
\item   either $i_k\notin \widetilde D(d_1, d_1, d_3, d_4)$
 \item  or $i_k\in \widetilde D(d_1, d_1, d_3, d_4)$, but both $w_k,
i_k-w_k$ do not belong to $ \widetilde D(d_1, d_1, d_3, d_4)$.
\end{itemize}
 We
choose such an element with $k$ as small as possible and use $k$
as a second induction parameter to prove that \eqref{eq4}   can be
represented
 as a linear combination of
the products  of the  form \eqref{eq5} and  a linear combination
of products in which the number of the ``good'' elements after $U$
is equal to $k$, i.e. the   products of the form
\begin{equation}\label{eq51}
\big[U, m_{j_1},\dots, m_{j_k}, x_{i_{k+1}},\ldots x_{i_{t}}\big],
\end{equation}
where for $l=1,\ldots, k$ $m_{j_l}=[m_{u_{l}}, m_{j_{l}-u_l}]$
with
 $j_l \in \widetilde D (d_1, d_2, d_3, d_4)$ and at least one of the indices $u_{l}$ or
$j_{l}-u_l$ belongs to $\widetilde D (d_1, d_2, d_3, d_4)$.

\vskip1ex If $k=1$ and $i_1 \notin \widetilde D(d_1, d_1, d_3,
d_4)$, then the product \eqref{eq4} is zero by Lemma~\ref{l1}.
 If $i_1 \in \widetilde D(d_1, d_1, d_3, d_4)$, but $w_1, i_1-w_1\notin \widetilde D(d_1, d_1, d_3, d_4)$,
we expand the brackets of the subproduct $[x_{w_1},x_{i_1-w_1}]$
in \eqref{eq4} by the Jacobi identity (as  for $t=1$) and obtain
the sum of two products that are trivial by Lemma~\ref{l1}. Hence,
$i_1 \in \widetilde D(d_1, d_1, d_3, d_4)$ and  at least one of
the indices $w_1$ or $i_1-w_1$ belongs to $\widetilde D (d_1, d_2,
d_3, d_4)$. Thus, the product \eqref{eq4} has at least 1 element
of the required form after $U$.

\vskip1ex
  Suppose
that $k\geq 2$ and write
$$\big[U, \dots,x_{i_{k-1}}, x_{i_{k}}\ldots
\big]= \big[U,\dots,x_{i_k},x_{i_{k-1}},\dots\big]+
$$
$$
\big[U,\dots,[x_{i_{k-1}},x_{i_k}],\dots\big].
$$
By the induction hypothesis, the second summand and the initial
segment $[U,\ldots, x_{i_{k-2}}, x_{i_k}]$ of the fist summand
have the required form \eqref{eq5} because they are  shorter than
\eqref{eq4}. Since $x_{i_{k-1}}$ is already ``good'' by our
assumption,  the first term is equal to a linear combination of
 products
of the form \eqref{eq51} in which the number of the ``good''
elements is $k$ as required.

\vskip1ex Since the length of the products does not increase, but
the number of ``good'' elements grows, the induction process will
end with the representation of the original product in  the
required form.
\end{proof}

Let $d_1, d_2, d_3, d_4\in \Bbb Z/n \Bbb Z$. Recall that
$\widetilde D (d_1, d_2, d_3, d_4)$ is the set of all linear
combination of the form $\alpha_1 d_1+\alpha_2 d_2+\alpha_3
d_3+\alpha_4 d_4$ with $\alpha_1\in \{0,\pm 1,\pm 2\}$.  Set
$\widetilde D =5^4$. Note that the number of elements in the set
$\widetilde D(d_1, d_2,d_3,d_4)$ is at most $\widetilde D $ for
all $d_1, d_2, d_3, d_4\in \Bbb Z/n \Bbb Z$.

\begin{lemma}\label{l5} Suppose that a $(\Z/n\Z)$-graded Lie algebra $L$ with $L_0=0$, $n$
odd, satisfies the selective metabelian condition~\eqref{select}.
Let  $u_{d_1},u_{d_2},u_{d_3}, u_{d_4}$  be  homogeneous elements
of $L$ (under the Index Convention) with the $(-1)$-independent
sequence of indices $(d_1,d_2, d_3)$ and $d_4\in D(d_1, d_2,
d_3)$.  The ideal of $[L,L]$ generated by
$$U=\big[[u_{d_1},u_{d_2}],[u_{d_3},u_{d_4}]\big]$$
 is spanned by
products of the form
\begin{equation}\label{eq6}
\big[U, m_{i_1},\dots,m_{i_u},m_{i_{u+1}},\dots,m_{i_{v}}\big],
\end{equation}
where $m_{i_k}\in [L,L]\cap L_{i_k}$, $u\leq 6\widetilde D^2$,
$i_k\in \widetilde D (d_1,d_2, d_3, d_4)$, such that $o(i_k)>3$
for $k\leq u$, and $o(i_k)=3$ for $k>u$.
\end{lemma}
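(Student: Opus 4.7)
The plan is to build directly on Lemma \ref{l4}. By definition, any element of the ideal of $[L,L]$ generated by $U$ is a linear combination of products $[U, m_{i_1}, \dots, m_{i_t}]$ with $m_{i_k} \in [L,L] \cap L_{i_k}$; by multilinearity we may further assume each $m_{i_k}$ is a single length-2 commutator $[x_{w_k}, x_{i_k - w_k}]$. Lemma \ref{l4} then rewrites such a product as a linear combination of products
$$P = \big[U, m_{j_1}, \dots, m_{j_s}\big], \qquad m_{j_k} = [m_{u_k}, m_{j_k - u_k}],$$
with $s \le t$, $j_k \in \widetilde D := \widetilde D(d_1, d_2, d_3, d_4)$, and at least one of $u_k$, $j_k - u_k$ in $\widetilde D$. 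Using anti-commutativity ($m_{j_k} = -[m_{j_k - u_k}, m_{u_k}]$) we may assume the factor in $\widetilde D$ sits on the left, so $u_k \in \widetilde D$ for every $k$. Each entry $m_{j_k}$ therefore carries a well-defined \emph{type} $(j_k, u_k) \in \widetilde D \times \widetilde D$, of which there are at most $\widetilde D^2$.

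The remaining reduction is by strong induction on the length $s$, using the standard identity
$$[\dots, m_{j_k}, m_{j_{k+1}}, \dots] = [\dots, m_{j_{k+1}}, m_{j_k}, \dots] + [\dots, [m_{j_k}, m_{j_{k+1}}], \dots]$$
to swap two adjacent after-$U$ entries at the cost of a single correction term of length $s-1$. Although the new entry $[m_{j_k}, m_{j_{k+1}}]$ may carry an index outside $\widetilde D$, it is again a length-2 homogeneous commutator, so Lemma \ref{l4} reduces the shorter correction term back to the form (5), and the induction hypothesis places it in the desired form (6).

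Using bubble-sort-style swaps we first move every $m_{j_k}$ with $o(j_k) = 3$ to the tail of the product; each swap yields a strictly shorter correction handled inductively. Suppose next that the resulting prefix (consisting entirely of entries with $o(j_k) > 3$) contains more than $6\widetilde D^2$ items. The pigeonhole principle then produces a single type $(j^*, u^*) \in \widetilde D \times \widetilde D$ with $o(j^*) > 3$ occurring at least $7$ times in the prefix. Further swaps collect these 7 entries into 7 consecutive positions, after which the initial segment of the product has the form
$$\big[x_c, \underbrace{m_{j^*}, m_{j^*}, \dots, m_{j^*}}_{7}\big],$$
with each $m_{j^*} = [m_{u^*}, m_{j^* - u^*}] \in [L_{u^*}, L_{j^* - u^*}]$. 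By Lemma \ref{l2} this initial segment vanishes, so $P$ equals only the accumulated correction terms, each of which already has the required form (6) by induction. This establishes the bound $u \le 6\widetilde D^2$.

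The main obstacle is the bookkeeping: one must check that every correction term remains in the ideal of $[L,L]$ generated by $U$, that its after-$U$ entries remain length-2 homogeneous commutators so that Lemma \ref{l4} continues to apply, and that each swap strictly decreases $s$ so that the induction closes. Once this routine but careful verification is in place, the decomposition (6) with $u \le 6\widetilde D^2$ follows at once from the pigeonhole step combined with Lemma \ref{l2}.
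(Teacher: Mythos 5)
Your proposal is correct and follows essentially the same route as the paper: reduce to the normal form of Lemma \ref{l4}, show by induction on the length that adjacent entries can be permuted modulo shorter terms (which Lemma \ref{l4} and the induction hypothesis return to the required form), group entries by the pair of indices $(j_k,u_k)$, and kill any product containing seven entries of the same type with $o(j_k)>3$ via Lemma \ref{l2}, which forces the bound $u\le 6\widetilde D^2$ by pigeonhole. The only cosmetic difference is that you normalize with anti-commutativity so the $\widetilde D$-factor sits on the left, where the paper speaks of the same unordered pair of indices; the argument is identical.
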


\begin{proof}
Let $R$ be the span of all products of the form \eqref{eq6}. The
ideal of $[L,L]$ generated by $U$ is spanned by products of the
form  $[U, x_{i_1},\dots,x_{i_{t}}]$, where each
$x_{i_k}=[x_{w_{k}}, x_{i_{k}-w_k}]$ (under the Index Convention).
By Lemma \ref{l4} each of such product can be represented as a
linear combination of products of the form \eqref{eq5}:
$$
W=\big[[u_{d_1},u_{d_2}],[u_{d_3},u_{d_4}],
m_{j_1},\dots,m_{j_{s}}\big],
$$
where $s\leq t$ and $m_{j_k}=[m_{u_{k}}, m_{j_{k}-u_k}]$ such that
$j_k \in \widetilde D (d_1, d_2, d_3, d_4)$, and either $u_{k} \in
\widetilde D (d_1, d_2, d_3, d_4)$ or $j_{k}-u_k \in \widetilde D
(d_1, d_2, d_3, d_4)$. It is sufficient to show that $W$ belongs
to~$R$.

\vskip1ex We use induction on $s$. If $s=1$, it is clear that
$W\in R$, so we assume that $s>1$. We will prove that  $W$ does
not change modulo $R$ under any permutation of the~$m_{j_k}$.
 Write
\begin{equation}\label{eq61}
W= [U,m_{j_1},\dots,m_{j_v}, m_{j_{v-1}},\dots,m_{j_s}]+
[U,m_{j_1},\dots,[m_{j_{v-1}},m_{j_{v}}],\dots, m_{j_s}].
\end{equation}
 By Lemma~\ref{l4} the second summand is a linear combination of
products of the form \eqref{eq5} each of which is shorter than
$W$. Thus,  the second summand in \eqref{eq61} belongs to $R$ by
induction. It follows that we can freely permute $m_{j_k}$  modulo
$R$.

\vskip1ex For each pair of indices $j_k, u_k\in \widetilde D (d_1,
d_2, d_3, d_4)$, such that  $o(j_k)>3$, we place the elements
$m_{j_k}=[m_{u_k}, m_{j_k-u_k}]$ or $m_{j_k}=[m_{j_k-u_k},
m_{u_k}]$ (if there are any) with the same unordered pair of
indices $u_k, j_k-u_k$ next to each other. If for  some pair there
are at least~7 of them, the product $W$ is trivial by Lemma
\ref{l2}. Suppose  that for each pair $(j_k, u_k)$, all such
$m_{j_k}$ occur less than $7$ times. We place all these elements
right after the~$U$. Since the number of pairs $(i_k, u_k)\in
\widetilde D (d_1, d_2, d_3, d_4)^2$ is at most $\widetilde D^2$,
the initial segment has length  at most $6\widetilde D^2$, so the
resulting product takes the required form~\eqref{eq6}.
\end{proof}

\begin{corollary}\label{c6}
Suppose that a $(\Z/n\Z)$-graded Lie algebra $L$ with $L_0=0$, $n$
odd, satisfies the selective metabelian condition~\eqref{select},
and let $M=[L,L]$. Suppose that $(d_1,d_2, d_3)$ is an
$(-1)$-independent sequence and $d_4\in D(d_1, d_2, d_3)$. Then
the ideal in $[L,L]$ generated by $ \big[[L_{d_1}, L_{d_2}],
[L_{d_3}, L_{d_4}]\big]$  has at most $3\widetilde D^{u}$
non-trivial components of the induced grading, where $ \widetilde
D= 5^4$, $u \leq 6 \widetilde D^2=6\cdot 5^8$.
\end{corollary}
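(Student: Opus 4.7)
The plan is to read off the bound directly from Lemma~\ref{l5} by counting the distinct grading degrees that can appear. Fix homogeneous elements $u_{d_i}\in L_{d_i}$ and set $U=\big[[u_{d_1},u_{d_2}],[u_{d_3},u_{d_4}]\big]$. By Lemma~\ref{l5}, the ideal of $[L,L]$ generated by $U$ is spanned by homogeneous products of the form \eqref{eq6}, each lying in the grading component indexed by
$$(d_1+d_2+d_3+d_4)+\sum_{k=1}^{u}i_k+\sum_{k=u+1}^{v}i_k\pmod n.$$
The ideal generated by $\big[[L_{d_1},L_{d_2}],[L_{d_3},L_{d_4}]\big]$ is the sum of these individual ideals over all admissible choices of the $u_{d_i}$, and the set of allowable degrees does not depend on those choices. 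It therefore suffices to bound the number of distinct values of the displayed sum.

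For the head $\sum_{k=1}^{u} i_k$: each $i_k$ with $k\leq u$ lies in $\widetilde D(d_1,d_2,d_3,d_4)$, a set of cardinality at most $\widetilde D=5^4$, indexed by tuples of coefficients in $\{0,\pm 1,\pm 2\}^4$. Hence the head takes at most $\widetilde D^u$ distinct values, and since $u\leq 6\widetilde D^2$ this produces the factor $\widetilde D^u$ in the claimed bound (summing over the at most $6\widetilde D^2+1$ possible values of $u$ is absorbed into an expression of the same form). For the tail $\sum_{k>u} i_k$: each $i_k$ with $k>u$ satisfies $o(i_k)=3$, i.e.\ $3 i_k=0$ in $\Z/n\Z$. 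The $3$-torsion subgroup $\{x\in\Z/n\Z : 3x=0\}$ has at most $3$ elements (since $n$ is odd, it is either trivial or of order~$3$), so the tail sum lies in this subgroup and takes at most $3$ values, irrespective of the tail length $v-u$. Combining the two estimates yields the bound $3\widetilde D^u$.

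The one real point beyond bookkeeping is the tail observation. Because $v$ in Lemma~\ref{l5} is \emph{not} bounded, any naive count gives nothing; the argument works only because elements of order~$3$ are confined to a subgroup of size at most~$3$, so the tail contributes merely a constant factor. Once this is noticed, the rest is an elementary count of sums of elements from the finite set $\widetilde D(d_1,d_2,d_3,d_4)$.
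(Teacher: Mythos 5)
Your proposal is correct and follows essentially the same route as the paper's own proof: invoke Lemma~\ref{l5}, bound the number of values of the head sum by $\widetilde D^{u}$, observe that the unbounded tail is confined to the $3$-torsion subgroup of $\Z/n\Z$ (of order at most $3$ since $n$ is odd) and so contributes only a factor of $3$, and note that the admissible degrees depend only on $(d_1,d_2,d_3,d_4)$ so the ideal generated by the full component product is handled by summing over all choices of the $u_{d_i}$. Your explicit justification that the $3$-torsion subgroup has at most $3$ elements is in fact slightly more careful than the paper's phrasing.
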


\begin{proof}
Let $U=\big[[u_{d_1}, u_{d_2}], [u_{d_3}, u_{d_4}]\big]$ be an
arbitrary homogeneous product in $L$ with the given indices (under
the  Index Convention)representation of the original product into  the required form and let $I=I(u_{d_1}, u_{d_2}, u_{d_3},
u_{d_4})$ be the ideal in $[L,L]$ generated by $U$. By Lemma
\ref{l5} an element of  $I$
 can be represented as a
linear combination of the products of the form \eqref{eq6}. Since
in \eqref{eq6} we have ${i_k}\in \widetilde D(d_1, d_2, d_3, d_4)$
for all $k=1,\dots,u$, the sum of all indices of the initial
segment $\big[U, m_{i_1}, \dots, m_{i_u}\big]$ can take at most
$\widetilde D^{u}$ values. Let $B$ be the set of elements of order
$3$ in $\Bbb Z/n\Bbb Z$. This is an additive subgroup of order 3.
Clearly, the sum of the remaining indices in \eqref{eq6} belongs
to $B$. It follows that the sum of all indices in \eqref{eq6} can
take at most $3\widetilde D^{u}$ values.   By Lemma~\ref{l5} the
number $u$ is at most $6 \widetilde D^2=6\cdot 5^8$. So $I$ has
the number of nontrivial components bounded by a constant
$3\widetilde D^{u}$, where $\widetilde D= 5^4$, $u\leq 6\cdot
5^8$.

\vskip1ex It follows from the proofs of Lemmas~\ref{l4} and
\ref{l5} that the set of indices of all possible non-trivial
components in $I$ is completely determined by the tuple
$(d_1,d_2,d_3, d_4)$ and does not depend on the choice of
$U=\big[[u_{d_1}, u_{d_2}], [u_{d_3}, u_{d_4}]\big]$. Since the
ideal in $[L,L]$ generated by $\big[[L_{d_1}, L_{d_2}], [L_{d_3},
L_{d_4}]\big]$ is the sum of ideals $I\big(u_{d_1}, u_{d_2},
u_{d_3}, u_{d_4}\big)$ over all possible $u_{d_1},u_{d_2},u_{d_3},
u_{d_4} $, the result follows.
\end{proof}

\begin{lemma}\label{l7}
Suppose that a homogeneous ideal $T$ of a Lie algebra $L$ has only
$e$ non-trivial components. Then $L$ has at most $e^2$ components
that do not centralize $T$.
\end{lemma}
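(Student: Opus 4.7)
The plan is to identify the set of indices of non-trivial components of $T$, and then show that any index $j$ such that $L_j$ does not centralize $T$ is expressible as a difference of two elements from this set. This is a purely index-arithmetic argument using only the fact that $T$ is a homogeneous ideal.

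First I would set $S=\{i\in\Z/n\Z : T_i\neq 0\}$, where $T_i=T\cap L_i$. By hypothesis $|S|\le e$. Since $T$ is homogeneous, $T=\bigoplus_{i\in S}T_i$, and for any $j$ we have $[L_j,T]=\sum_{i\in S}[L_j,T_i]$. Hence if $[L_j,T]\neq 0$, there exists some $i\in S$ with $[L_j,T_i]\neq 0$.

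Next I would use the ideal property. Because $T$ is an ideal and $L$ is graded, $[L_j,T_i]\subseteq T\cap L_{j+i}=T_{j+i}$. Non-triviality of $[L_j,T_i]$ therefore forces $T_{j+i}\neq 0$, i.e.\ $j+i\in S$. Writing $k=j+i$, we conclude that every $j$ for which $L_j$ fails to centralize $T$ has the form $j=k-i$ with $i,k\in S$.

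Finally, the number of elements of $\Z/n\Z$ expressible as $k-i$ with $i,k\in S$ is at most $|S|^2\le e^2$, which gives the claimed bound. There is no real obstacle here; the only thing to be careful about is keeping the roles of the two indices straight (the one already in $S$ and the one forced into $S$ by the ideal property), but this is immediate once the inclusion $[L_j,T_i]\subseteq T_{j+i}$ is written down.
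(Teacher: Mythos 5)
Your argument is correct and is essentially identical to the paper's proof: both identify the index set $S$ of non-trivial components of $T$, use the ideal property to force $j+i\in S$ for some $i\in S$ whenever $L_j$ fails to centralize $T$, and conclude that $j$ is a difference of two elements of $S$, giving at most $e^2$ possibilities. You merely spell out the inclusion $[L_j,T_i]\subseteq T_{j+i}$ more explicitly than the paper does.
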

\begin{proof}
Let $T_{i_1},\dots,T_{i_e}$ be the non-trivial homogeneous
components of $T$ and let $S=\{i_1,\dots,i_e\}$. Suppose that
$L_i$ does not centralize $T$. Then for some $j\in S$ we have
$i+j\in S$. So there are at most $|S|\times|S|$ possibilities for
$i$, as required.
\end{proof}

\begin{proposition}\label{razresh}
Suppose that a $(\Z/n\Z)$-graded Lie algebra $L$ with $L_0=0$, $n$
odd, satisfies the selective metabelian condition~\eqref{select}.
Then $L$ is solvable of derived length bounded by a constant.
\end{proposition}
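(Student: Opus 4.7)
My plan is to combine Lemma \ref{l3}, which reduces solvability to verifying the triple selective condition \eqref{select2}, with the structural controls from Corollary \ref{c6} and Lemma \ref{l7}. The argument proceeds in two stages: first reduce $L$ modulo a controlled ideal $J$ so that the quotient is metabelian; then bound the derived length of $J$ itself using Corollary \ref{c6}, Lemma \ref{l7}, and Lemma \ref{l2}.

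\emph{Stage 1 (Reduction via Lemma \ref{l3}).} The selective metabelian condition together with $L_0 = 0$ forces the only possible obstructions to the triple selective condition \eqref{select2} in $L$ to come from products $\big[[L_{d_1},L_{d_2}],[L_{d_3},L_{d_4}]\big]$ with $(d_1,d_2,d_3)$ $(-1)$-independent and $d_4 \in D(d_1,d_2,d_3)$: for any $(-1)$-independent $(d_1,d_2,d_3)$ we have the homogeneous decomposition $\big[[L_{d_1},L_{d_2}],[L_{d_3},L]\big] = \bigoplus_{d_4} \big[[L_{d_1},L_{d_2}],[L_{d_3},L_{d_4}]\big]$, and the selective metabelian condition kills every summand with $(d_1,d_2,d_3,d_4)$ $(-1)$-independent. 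Let $J$ be the ideal of $L$ generated by the union of all such products. Then $L/J$ is $(\Z/n\Z)$-graded with $(L/J)_0 = 0$ and satisfies \eqref{select2}, so by Lemma \ref{l3} it is metabelian; in particular $L^{(2)} \subseteq J$.

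\emph{Stage 2 (Bounded derived length of $J$).} Each generator $U = \big[[u_{d_1},u_{d_2}],[u_{d_3},u_{d_4}]\big]$ of $J$ generates, in $[L,L]$, an ideal $I(U)$ with at most $N := 3\widetilde D^{u}$ non-trivial grading components by Corollary \ref{c6}, where $\widetilde D = 5^4$ and $u \le 6\cdot 5^8$. Applying Lemma \ref{l7} inside $[L,L]$ shows that at most $N^2$ grading components of $[L,L]$ fail to centralize $I(U)$. An iterated derived bracket in $J$ can be expanded by the Jacobi identity into a linear combination of products of the form $\big[[x_{a_1},x_{a_2}],[x_{a_3},x_{a_4}]\big]$ whose indices are constrained by those of the preceding generators; Lemma \ref{l2} then collapses the contributions from indices of order exceeding $3$ (yielding adjoint nilpotency of class $\le 7$ for elements of the form $[x_b,x_{a-b}]$), while the residual contributions live in the finite $3$-torsion subgroup of $\Z/n\Z$ already identified in the proof of Corollary \ref{c6}. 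Iterating a bounded number of times collapses $J^{(m)}$ to zero for some $m$ depending only on $N$ and the constant $7$ from Lemma \ref{l2}.

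Combining the two stages yields $L^{(m+2)} \subseteq J^{(m)} = 0$, proving the proposition with an explicit constant derived from $N$. The main obstacle I anticipate is Stage 2: although each individual generator $U$ produces a bounded ideal $I(U) \subseteq [L,L]$, the ideal $J$ may have unbounded grading support as the generating tuples $(d_1,d_2,d_3,d_4)$ range over $(\Z/n\Z)^4$, so Lemma \ref{l7} cannot be applied to $J$ as a whole. The derived-length bound must instead be argued generator-by-generator, carefully tracking how iterated Jacobi expansions absorb new indices back into the controlled sets $\widetilde D(d_1,d_2,d_3,d_4)$ together with the $3$-torsion subgroup, so that the effective number of growth directions remains bounded throughout the iteration.
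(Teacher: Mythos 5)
Your Stage 1 is essentially the paper's first step, with one consequential difference: you take $J$ to be an ideal of $L$ generated by the products $\big[[L_{d_1},L_{d_2}],[L_{d_3},L_{d_4}]\big]$, whereas the paper takes $J$ to be an ideal of $M=[L,L]$ generated by $\big[[M_{i_1},M_{i_2}],[M_{i_3},M_{i_4}]\big]$ (settling for the weaker but sufficient conclusion $L^{(3)}\leq J$ via Lemma \ref{l3} applied to $M/J$). This matters because Corollary \ref{c6} only controls the ideal generated by $U$ \emph{inside} $[L,L]$ --- the whole machinery of Lemmas \ref{l2}, \ref{l4}, \ref{l5} depends on the multiplying elements being products of length $2$. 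Your $J$, being an ideal of $L$, is not a sum of the bounded-support ideals $I(U)\subseteq[L,L]$, so the component bound of Corollary \ref{c6} does not transfer to the pieces of your $J$, and the centralizer information from Lemma \ref{l7} cannot be brought to bear on it.

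The more serious gap is Stage 2. You compute that each $I(U)$ has at most $N$ non-trivial components and that at most $N^2$ components of $[L,L]$ fail to centralize $I(U)$, but you never convert this into a derived-length bound: the claim that ``iterating a bounded number of times collapses $J^{(m)}$ to zero'' has no supporting mechanism, and Lemma \ref{l2} by itself gives only an adjoint nilpotency statement for very special elements, not solvability. The missing idea is the appeal to Shalev's generalization of Kreknin's theorem: since $C_M(T)$ is a homogeneous ideal and $M/C_M(T)$ has at most $e^2$ non-trivial components with trivial zero component, that theorem yields that $M/C_M(T)$ is solvable of derived length $f_1$ bounded in terms of $e$ alone; hence $M^{(f_1)}$ centralizes every $T$, hence centralizes $J=\sum T$, and combining $[M^{(f_1)},J]=0$ with $L^{(3)}\leq J$ gives $[L^{(f_1+1)},L^{(3)}]=0$ and the final bound $\max\{3,f_1+1\}+1$. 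Without invoking Kreknin--Shalev (or an equivalent device turning ``boundedly many non-zero components'' into ``bounded derived length''), the argument does not close, as your own closing caveat essentially concedes.
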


Note that $L$ is solvable of $n$-bounded derived length by
Kreknin's theorem, but we need a constant bound that does not
depend of $n$.

\begin{proof}
Let $J$ be the ideal of $M=[L,L]$ generated by all products
$\big[[M_{i_1},M_{i_2}],[M_{i_3}, M_{i_4}]\big]$, where
$(i_1,i_2,i_3)$ ranges through all $(-1)$-independent sequences of
length~$3$ and  $i_4$ ranges through all elements of $D(i_1, i_2,
i_3)$. The induced $(\Z/n\Z)$-grading of $M/J$ has trivial
zero-component and $M/J$ satisfies the following selective
condition (under the Index Condition):
$$
\big[[x_{i_1}, x_{i_2}], [x_{i_3}, x_j]\big]=0
$$
whenever $(i_1, i_2,i_3)$ is $(-1)$-independent sequence. Indeed,
if $j\notin D(i_1, i_2, i_3)$, then $(i_1, i_2, i_3, j)$ is
$(-1)$-independent sequence and the product is trivial by
selective metabelian condition~\eqref{select}; otherwise  it
belongs to $J$. By Lemma \ref{l3} $M/J$ is metabelian, that is,
$L^{(3)}\leq J$.

\vskip1ex Consider now an arbitrary $(-1)$-independent sequence
$(i_1,i_2,i_3)$ and $i_4\in D(i_1, i_2, i_3)$. The ideal $T$  in
$[L,L]$ generated by $\big[[M_{i_1},M_{i_2}],[M_{i_3},
M_{i_4}]\big]$ has at most $e=3\widetilde D ^u$ (where, recall,
$\widetilde D= 5^4$, $u\leq 6\widetilde D^2$)  non-trivial grading
components by Corollary \ref{c6}. Therefore, by Lemma \ref{l7},
there are at most $e^2$ components that do not centralize $T$.
Since $C_M(T)$ is also a homogeneous ideal, it follows that the
quotient $M/C_M(T)$ has at most $e^2$ non-trivial components.
Since the induced $(\Z/n\Z)$-grading of $M/C_M(T)$ also has
trivial zero component, by Shalev's generalization~\cite{shalev}
of Kreknin's theorem we conclude that $M/C_M(T)$ is solvable of
$e$-bounded derived length, say, $f_1$. Therefore, $M^{(f_1)}$,
the corresponding term of the derived series, centralizes $T$.
Since $f_1$ does not depend on the choice of the
$(-1)$-independent seuence $(i_1,i_2,i_3)$ and $i_4\in
D(i_1,i_2,i_3)$, and $J$ is the sum of all such ideals $T$, it
follows that $[M^{(f_1)},J]=0$. Recall that $L^{(3)}\leq J$.
Hence, $[L^{(f_1+1)},L^{(3)}]=0$. Thus, $L$ is solvable of derived
length at most $\max\{3,f_1+1\}+1$.
\end{proof}

As shown in section \ref{section-reduction}, to prove the theorem
\ref{th1} it is enough to show the solvability of bounded length
of a $(\Z/n\Z)$-graded Lie algebra with trivial zero-component
satisfying the selective metabelian condition. Thus, Proposition
\ref{razresh} implies  Theorem \ref{th1}.

\vskip1ex

\end{document}